\def\H{\mathcal{H}}
\newtheorem{theorem}{Theorem}[section]
\newtheorem{lemma}[theorem]{Lemma}
\newtheorem{remark}[theorem]{Remark}
\newtheorem{definition}[theorem]{Definition}
\newcommand{\R}{\mathbb{R}}
\def\RR{\mathbb{R}}
\def\bx{{\bf x}}
\renewcommand{\H}{{\mathcal H}}
\def\div{\mbox{{\rm div}}}
\def\va{\raise 2pt\hbox{,}}
\title[Porous-media flux--saturated Keller--Segel models]
      {Cross--diffusion and traveling waves in porous-media flux--saturated Keller--Segel models}
\author[M. Arias, J. Campos \& J. Soler]{Margarita Arias, Juan Campos and Juan Soler}
\address{Departamento de Matem\'atica Aplicada and Excellence Research Unit ``Modeling Nature'' (MNat), Facultad de Ciencias. Universidad de Granada. 10871-Granada. Spain }
\email{marias@ugr.es, campos@ugr.es, jsoler@ugr.es}
\subjclass{Primary: 35K57, 35A15, 35C07}
\keywords{Flux-saturated, Keller-Segel, Blow-up, Traveling waves, Cross-diffusion, Solitons}
\thanks{ This work was partially supported in part by MINECO (Spain), 
 project  MTM2014-53406-R, FEDER resources, and Junta de Andaluc\'{\i}a Project  P12-FQM-954.}
\begin{document}


\begin{abstract}

This paper  deals with the analysis of qualitative properties involved in the dynamics of Keller--Segel type systems in which the diffusion mechanisms of the cells are driven by porous-media flux--saturated  phenomena.
We study the regularization inside the support of a solution with jump discontinuity  at the boundary of the support. We analyze the behavior of the size of the support and blow--up of the solution,  and the possible convergence in finite time towards a Dirac mass in terms of the three constants of the system: the mass, the flux--saturated characteristic speed, and the chemoattractant sensitivity constant. These constants of motion also characterize the dynamics of regular and singular traveling waves.
\end{abstract}
\maketitle

\section{Introduction}
Cross-diffusion is the phenomenon in which a gradient in the concentration of one species affects the  diffusion flux of another species. The aim of this paper is to study spatiotemporal patterns that occur in cell aggregation induced by the attracting chemical signal produced by the cells themselves leading to a reacting system which is  counterbalanced by cell diffusion with a saturated flux. Patterns  arising by signals exerting a concentration-dependent mechanism constitute a  class of self--organized structures, which are motors of generation of cellular diversity like Turing structures or standing waves. In the classical Keller--Segel model, the chemoattractant is emitted by the cells that react according to biased random walk inducing linear diffusion operators.
Chemotaxis can be defined as the movement or orientation of a population (bacteria, cell or other single or multicellular organisms) induced by a chemical concentration gradient either towards or away from the chemical signals. The original Keller--Segel model consists in a reaction--diffusion system of
two coupled parabolic equations:
\begin{equation} 
\left\{
\begin{array}{l}
 \partial_t u  =
\div_{\bx} (D_u \nabla_\bx u - \chi u \nabla_\bx S)+H(u,S),
{} \\
{} \\
 \alpha \partial_t S   = D_S \Delta S + K(u,S), \label{KS}
\end{array} \right.
\end{equation}
where $u=u(t,\bx)$ is the cell density at position $\bx$ and time
$t$, and $S=S(t,\bx)$ is the density of the chemoattractant. The
positive definite terms $D_S$ and $D_u$ are the diffusivity of the
chemoattractant and of the cells, respectively,  $\chi\geq
0$ is the chemotactic sensitivity, $\alpha>0$ represents a time scale of the process, and the functions $H(u,S)$ and $K(u,S)$
in (\ref{KS}) model the interaction (production and degradation) between the cell
density and the chemical substance. In most simplified models and in the original Keller--Segel system, these terms are modeled as $K(u,S)=u -S$ and $H(u,S)=0$.

Self--organized patterning in reaction--diffusion systems driven by linear diffusion has been extensively studied since the seminal paper of Turing about morphogenesis. The interaction between diffusion and reaction has shown to yield rich and unexpected phenomena in different contexts. Theoretical and mathematical modeling of chemotaxis
starts with the works of Patlak in the 1950s and Keller and Segel in the 1970s \cite{[KS71],[K80],[PA]}.
Depending upon the size of cell density and the spatial dimensions under consideration, the nonlocal chemical interaction could dominate diffusion and/or produce blow--up of the solutions \cite{DP,[DS09]}, where we refer to blow--up according to the classic concept of loss of $L^\infty$ estimate of the solution  
 $u$. For
the two--dimensional case, in \cite{B,DP} was proved that, if the mass $M$
exceeds $\frac 8 \pi$, then any classical solution
blows up in finite time, while if 
$M < \frac 8 \pi$, then a classical solution exists globally,
since the diffusion dominates the aggregation.
However, in one dimension of space diffusion is always stronger than aggregation and blow--up never occurs for
 system (\ref{KS}), see \cite{H}. The situation could be modified in one space dimension
by changing the linear diffusion approach in the original Keller--Segel model. For example, when cell diffusion is ruled by fractional
diffusion, blow--up may or may not occur depending on the initial data \cite{BC}, see also \cite{[CH2]}. The use of the Keller--Segel model as a prototype in chemotaxis phenomena is motivated by  the simplicity of the associated reaction--diffusion equations, and  its ability to capture 
 nature experiments as well as the capacity to predict the dynamics of the collective motion of both species: cells and chemoattractant. In fact, this model has been extensively studied in a large variety of spatial patterns for the E. coli, and has also emerged as an important tool in the
understanding of embryonic pattern forming 
processes, such as the formation of spots or pigmentation patterning 
in shells or in the skin of several animals \cite{KM}, the movement of sperm towards the egg during
fertilization, migration of lymphocytes, prediction of the tumor cell-induced angiogenesis, and 
macrophage invasion into tumor, among others, see \cite{[AQ08],[FK02],[HW00],[HP],KM} for some applications in different contexts.

In the context of morphogenesis, linear diffusion implies that cells receive information instantaneously and passively and has so far posed irresolvable problems from a modeling point of view since it predicts the same exposure time for each cell in the whole tissue, which is not realistic. However,  as it has been pointed out in the experiments, see \cite{[DYH07]}, the concentration of morphogen received by the cells and the time of exposure are of similar relevance in order that real  biological patterns could be developed, because morphogen signals have cumulative effects. The used alternative of arbitrarily cutting off the tail of the Gaussian morphogen distribution induced by linear diffusion models contributes to create artificial fronts \cite{VGRaS},  while the real propagation fronts are far from those predicted by linear diffusion models.
 
A possible alternative to  linear Fickian diffusion  is to consider flux--saturated mechanisms. A prototype of the kind of degenerate nonlinear diffusion  phenomena to be considered in this paper is the following 
\begin{eqnarray}
\label{parab}
\displaystyle \frac{\partial u}{\partial t}= \nu \, \frac{\partial}{\partial x}\left( \frac{|u|  \frac{\partial u^m}{\partial x}}{\sqrt{1+\frac{\nu^2}{c^2} \left|\frac{\partial u^m}{\partial x}\right|^2}} \right) + \mbox{ reaction terms},\quad  \nu, c>0,\, m\ge 1,
\end{eqnarray}
which combines flux--saturation effects together with those of porous media flow. It will be also considered the so called relativistic heat equation
\begin{eqnarray}
\label{parab-rel}
\displaystyle \frac{\partial u}{\partial t}= \nu \, \frac{\partial}{\partial x}\left( \frac{|u|  \frac{\partial u}{\partial x}}{\sqrt{u^2+\frac{\nu^2}{c^2} \left|\frac{\partial u}{\partial x}\right|^2}} \right) + \mbox{ reaction terms},\quad  \nu, c>0.
\end{eqnarray}
The derivation of these models using optimal transport theory, the fact that the propagation speed of discontinuous interfaces is generically bonded by $c$, and the convergence to the classical porous medium equation  have been studied in \cite{Campos2016,preprintVicent,CCC2}. The  Rankine--Hugoniot characterization of traveling jump discontinuities, traveling waves, waiting time and
smoothing effects have been previously treated in \cite{CCCSS-EMS}. However, the proposed Keller--Segel  model with flux--saturated mechanism will require of a more precise analysis.  Other  flux--saturated porous--media systems have been analyzed in \cite{CCCSS-SUR,CCCSS-INV}.
Flux--saturated equations were introduced in \cite{CKR,Ros-Non,Levermore81,MM,Rosenau2}, while the general theory for the existence of entropy solutions associated 
with flux--saturated equations has been widely developed in the framework of Bounded Variation functions, see \cite{ACMSV,ARMAsupp,ACM2005,CMSV,Giacomelli}. Regularity properties were analyzed in \cite{ACMSV,CCCSS-SUR,CCCSS-EMS,Carrillo}. Applications of these ideas to diverse contexts such as Physics, Astronomy or Biology can  be found for instance in \cite{CCCSS-SUR,Levermore81,Rosenau2,VGRaS}. The deduction of the Keller--Segel equations from the foundations of the kinetic theory as a combination of parabolic and hyperbolic limits is also of interest, see \cite{[BBNS07],[BBNS10],[CMPS04],[CDMOSS06],[LAC],[OH02]} for the derivation of the classical Keller--Segel system and \cite{[BBNS10B],BBTW,[CH2]} in the case of flux--saturated or porous media mechanisms of spreading.

Another idea in the line of adjusting the model to experimental evidence is to saturate the chemotactic flux function by modifying the dependence of the cell velocity term, which is proportional to the magnitude of the chemoattractant gradient $\chi u \nabla_\bx S$ in the original Keller--Segel model  \eqref{KS}. This could be done by optimizing the flux density of cells  along the trajectory induced by the chemoattractant, giving  rise to terms
 of type
\begin{eqnarray}
\div_{\bx}\left(\chi u \frac{\nabla_\bx S}{\sqrt{1 + |\nabla_\bx
S|^2}} \right),
\label{optimal}
\end{eqnarray}
in the corresponding Euler-Lagrange equation, which is a mean curvature type operator. This was first introduced in \cite{[BBNS10B]} and then numerically analyzed in \cite{cher}. 
Several authors have treated the Keller--Segel problem with some type of saturated diffusion since the pioneering work  \cite{[BBNS10B]}, see for instance \cite{BBTW,BW2,BW1,cher}. They have discussed the problem of well--posedness and the possibility of blow--up in the case that the cells motion have a saturated flux given by the relativistic heat equation \eqref{parab-rel} with different boundary conditions.

On the other hand, we are interested in exploring some particular solutions associated with the flux--saturated Keller--Segel model. Traveling waves, i. e. solutions of the type $u(t,x) = u(x - \sigma t)$, are archetypal models for pattern formation.   In the linear diffusion case, the  argument justifying that even if the solution has not compact support,  its size (invoking mass or concentration according to the case dealt with) is very small out of some ball with large radius, might be unrealistic, as we have pointed out before \cite{[DYH07],VGRaS}. Then, exploring or modeling new nonlinear transport/diffusion phenomena is an interesting subject not only from the viewpoint of applications, but also from a mathematical perspective.  In the framework of flux--saturated mechanisms, traveling waves have been studied in \cite{CCCSS-INV,CCCSS-SUR,Campos,Campos2016}. In this context, as we will show, the Keller--Segel system with flux--saturated mechanism (\ref{s}), which is the object of study in this paper,  exhibits new
properties with respect to the classical one.
Indeed, the existence of singular traveling waves is one of these new properties, which are of solitonic type if the corresponding initial data have compact support with finite mass.

Then, this paper deals with the problem of finding functions $u$ and $S$ verifying
\begin{equation}\label{s}
\left\{\begin{array}{ll}
\begin{array}{l}
\displaystyle {\frac{\partial u}{\partial t}}= \frac{\partial}{\partial x} \left(u\Phi
\left(u^{m-1}{\frac{\partial u}{\partial x}}\right) -a{\frac{\partial S}{\partial x}}u \right),\\ \\
\displaystyle {-\frac{\partial^2 S}{\partial x^2}}=  u,\end{array}& \quad x\in \R, \; t>0,
\end{array}\right.
\end{equation}
being $a$ a positive constant, $m\geq 0$ and $\Phi$ such that the following hypotheses are satisied:

\begin{equation}\tag{$H_1$}\label{H1}
\Phi \in C^2(\R), \quad \Phi(-y)=-\Phi(y), \quad  \Phi '(y)>0, \quad \lim_{y\to +\infty}\Phi(y)=c>0,
\end{equation}
for every $y \in \R.$ Moreover,
\begin{equation}\tag{$H_2$}\label{H2}
  \Phi '(y)=O(y^{-\alpha-1}),
   \mbox{ with } \alpha\geq 2.
  \end{equation}

Therefore, $c-\Phi(y)=O(y^{-\alpha})$. Note that from the definition of $\Phi$, the relativistic heat equation \eqref{parab-rel} is included for $m=0$ and $\alpha=2$, and the flux--saturated porous media equation \eqref{parab} is also considered for $m>0$. We want also to remark that modifying \eqref{s} by saturating the chemotactic flux  in the way \eqref{optimal} could be also possible in our analysis, but we omit here for simplicity.

\begin{remark}\label{R1}
Setting $g:=\Phi^{-1}$, as a consequence of the above assumptions we obviously have  $g(r)=O((c-r)^{-\frac{1}{\alpha}})$  and $g'(r)=O((c-r)^{-\frac{\alpha +1}{\alpha}})$. In particular,
 since $0<\frac{1}{\alpha}\leq\frac 1 2$, defining
$$G(u):=\int_0^ug(\sigma)\, d\sigma, \; u\in [-c,c], $$
we find that $G\in C[-c,c]\cap C^3(-c,c)$ and verifies $G(-u)=G(u), u\in [-c,c]$.
\end{remark}

Let us introduce the main results of this paper. To fix ideas, assume that the initial datum $0 \leq u_0 \in  L^\infty(\R)$ has compact support and mass equal to $M$. We also assume that $u_0$ has jump discontinuities only at the boundaries of its support, and the part of the interior of the support in which $u_0$ vanishes is a set of null measure. Finally, we assume that $\Phi$ satisfies \eqref{H1}--\eqref{H2}. Under these hypotheses, the existence of a unique entropic solution (see Theorem \ref{EUTEparabolic}) of system \eqref{s} is guaranteed by some previous analysis, for instance that carried out in \cite{ARMAsupp,ACM2005,CCCSS-SUR}. However, all the results of this paper are valid in the general framework of distributional solutions. Our results are focused on the study of three qualitative properties of these solutions:
\begin{enumerate}
\item {\em Regularity properties:} There is a finite time from which the solutions are regular inside their support. The idea is to study an associated dual problem for which, with the help of sub and super solutions as well as uniformly elliptic estimates, we can check the regularity properties of the solutions. These arguments will be developed in Section 2 and have been inspired in \cite{CCCSS-SUR,CCCSS-EMS,Carrillo}.
\item {\em Possible Blow--up and dynamic of the support:} We analyze the dynamic properties of the evolution of the support of the solution  in terms of the three parameters of the system: the mass $M$, the flux--saturated characteristic speed $c$, and the chemoattractant sensitivity constant $a$. We characterize the length of the support, $\ell(t)$, by means of the following function of time: $\ell(t)= \ell(0) +(2c-aM)t$.  In particular, if $M>\frac{2c}a$, there exists a possible blow--up time 
$ 
\displaystyle T^* = \frac{\ell(0)}{aM -2c}
$
at which the solution could be concentrated on a Dirac mass with mass $M$; this time $T^*$ is sharp in the case where the solution is not continuous, i.e. the ends of the solution support do not touch to zero. In the opposite case $M<\frac{2c}a$, the support of the solution grows indefinitely over time. Also, we prove in Section 3 that the velocity of distribution of the mass around the center of the support is given in terms of the mean first moment of the solution. These results are summarized in Section 3.
\item {\em Traveling waves:} We show that there are bounded traveling wave profiles both continuous and singular with jump discontinuities at the boundaries of the support of the solution, of the type ${u}(t,x)=u(x-\sigma t)$, where $u$ has support in $[\xi_-, \xi_+]$. Then, we prove that the traveling wave speed is described in terms of the flux--saturated velocity $c$, the  chemoattractant sensitivity constant $a$ and the mean first moment of $u$
\[
\displaystyle \bar{u} = \frac 1 {\ell(0)}  \int_{\xi_-}^{\xi_+} s\, u(s)\, ds,
\]
 as follows $
 \bar{\mu}= \frac{\xi_+M}{\ell(0)}  -\bar{u}.
 $
Since the support of the solutions are assumed compact and the mass is finite, the shape of the patterns here are more of  solitonic type. The analysis of these results are the aim of Section 4.
\end{enumerate}

Related to these results, although with different techniques, the following Keller--Segel initial--boundary value problem was analyzed in \cite{BW2,BW1}:
\begin{eqnarray*}
\displaystyle \frac{\partial u}{\partial t}&=&  \frac{\partial}{\partial x}\left( \frac{|u|  \frac{\partial u}{\partial x}}{\sqrt{u^2+ \left|\frac{\partial u}{\partial x}\right|^2}} \right) - a \frac{\partial}{\partial x}\left(
\frac{|u|  \frac{\partial S}{\partial x}}{\sqrt{1+ \left|\frac{\partial S}{\partial x}\right|^2}} \right),\\
\displaystyle {\frac{\partial^2 S}{\partial x^2}} &=  & u - \mu, \qquad \mu=\frac{1}{|\Omega |}\int_\Omega u_0(x) dx,
\end{eqnarray*}
with no--flux boundary conditions in balls $\Omega$. It was proved that the solutions are global and bounded if $m< m_c $, where $m_c= \left( a^2 -1\right)^{-1/2}$ if $a>1$, and $m_c = + \infty$ if $a \leq1$.

As a final remark, we would like to point out that the results can be extended to a broader context, particularly if the influence of the chemoattractant is given through a mean curvature type operator as in \eqref{optimal} or by the extended equation for the chemoattractant as established in the following remark.
\begin{remark}\label{r2}
In \eqref{s} we have considered a simplified  equation for the chemoattractant, but our analysis can be  extended to a more general class of equations of the type
$$
\displaystyle \delta{\frac{\partial S}{\partial t}}= {\frac{\partial^2 S}{\partial x^2}}+b u-\gamma S,\   x\in \R, \; t>0,
$$
being $ b, \delta$ and $\gamma$ positive constants.
For instance, the change of variable $W=e^{-(\gamma /\delta)t}S$ transforms the system (\ref{s})  into 
$$
\left\{\begin{array}{ll}
\begin{array}{l}
\displaystyle {\frac{\partial u}{\partial t}}= \frac{\partial}{\partial x} \left(u\Phi\left(u^{m-1}{\frac{\partial u}{\partial x}}\right) -ae^{(\gamma /\delta)t}{\frac{\partial W}{\partial x}}u \right),\\ \\
\displaystyle \delta{\frac{\partial W}{\partial t}}= {\frac{\partial^2 W}{\partial x^2}}+be^{-(\gamma /\delta)t} u,\end{array}&\quad  x\in \R, \; t>0.
\end{array}\right.
$$
The choice $b=1$ and the formal limits  $\delta \ll1$ and $\gamma /\delta \ll1$ allow to consider \eqref{s}. 
\end{remark}

\section{The dual problem}

Let us assume that  we are in the context of existence and uniqueness of entropic solutions given by Theorem \ref{EUTEparabolic} in Section 3, i.e. for any initial datum $0 \leq u_0 \in L^1(\R^d)\cap L^\infty(\R^d)$
there exists $T^d
> 0$ and a unique entropy solution $u$ of
(\ref{s}) in $Q_{T^d} = (0,T^d)\times \R^d$  such that $u(0) = u_0$. The index $^d$ refers to the dual problem.

The idea of this section is to analyze the regularity of solutions via an auxiliary dual problem by using a transformation called ``the mass coordinate'' of Lagrange \cite{MePuSh97}. This dual problem has some regularity properties that are typical of uniformly ellip\-tic operators of second order.  Lagrange transformations of this type are of relevance  in dealing with free boundary problems for nonlinear parabolic PDEs because the support transforms into a known domain, see \cite{Gurtin1984,MePuSh97} for references. 
This change of variables was  applied previously in \cite{CCCSS-SUR,CCCSS-EMS,Carrillo}.

Let $(u,S)$ be a solution of the system  (\ref{s}) 
such that  $u(t,x)\geq 0$. Define   $M:=\int_\R u(0,x)\, dx$, which is a constant of motion, and consider $\varphi: [0,+\infty)\times (0,M)\to \R$ given by
\begin{equation}\label{m}
\int_{-\infty}^{\varphi (t,\eta)} u(t,x)\, dx =\eta, \qquad \eta\in (0,M).
\end{equation}

Taking derivatives with respect to  $t$ in (\ref{m}),  and using the first equation in (\ref{s}), we deduce
\begin{eqnarray*}
0 &= & u(t, \varphi(t, \eta)){\frac{\partial \varphi}{\partial t}}(t, \eta) + \int_{-\infty}^{\varphi (t,\eta)} {\frac{\partial u}{\partial t}}(t,x)\, dx 
\\
&= &u(t, \varphi(t, \eta)){\frac{\partial \varphi}{\partial t}}(t, \eta) + \int_{-\infty}^{\varphi (t,\eta)} \frac{\partial}{\partial x}\left[u\Phi(u^{m-1}{\frac{\partial u}{\partial x}}) -a\left({\frac{\partial S}{\partial x}}u\right)\right] \, dx
\\
&=& u(t, \varphi(t, \eta)) \left[{\frac{\partial \varphi}{\partial t}}(t, \eta) + \Phi(u^{m-1}(t, \varphi(t,\eta)){\frac{\partial u}{\partial x}}(t, \varphi(t,\eta))-a{\frac{\partial S}{\partial x}}(t, \varphi(t,\eta))\right].
\end{eqnarray*}

Therefore, as long as $u(t, \varphi(t,\eta))>0$ is satisfied, we have
\begin{equation}\label{1phi}
{\frac{\partial \varphi}{\partial t}}((t,\eta))+  \Phi(u^{m-1}(t, \varphi(t,\eta)){\frac{\partial u}{\partial x}}(t, \varphi(t,\eta))=a\frac{\partial S}{\partial x}(t, \varphi(t,\eta)).
\end{equation}

Now,  taking  derivatives  in (\ref{m}) with respect to $\eta$, we find
$$
u(t, \varphi(t, \eta))\frac{\partial \varphi}{\partial \eta}(t, \eta) =1.
$$

Setting $\frac{\partial \varphi}{\partial \eta}(t, \eta)= v(t,\eta)$ and using the previous equality, we obtain $v(t,\eta)=(u(t, \varphi(t,\eta)))^{-1}$,  as long as $u(t, \varphi(t,\eta))>0$. Then, in this context we have
$$
\frac{\partial v}{\partial \eta}(t,\eta)= - {\frac{\partial u}{\partial x}}(t, \varphi(t,\eta))\frac{\partial \varphi}{\partial \eta}(t,\eta)(u(t, \varphi(t,\eta)))^{-2} = -{\frac{\partial u}{\partial x}}(t, \varphi(t,\eta))(v(t, \eta))^3,
$$
namely, ${\frac{\partial u}{\partial x}}(t, \varphi(t,\eta))=-\frac{\partial v}{\partial \eta}(t,\eta)(v(t, \eta))^{-3}.$ 

Substituting in (\ref{1phi}) and differentiating with respect to $\eta$, we find
$$
\frac{\partial v}{\partial t}(t,\eta)+\frac{\partial}{\partial \eta}\left[\Phi(-\frac{\partial v}{\partial \eta}(t,\eta)(v(t, \eta))^{-2-m})\right] = a\frac{\partial^2 S}{\partial x^2}(t, \varphi(t,\eta))v(t, \eta).
$$
Taking into account the second equation in (\ref{s}) and the even character of  $\Phi$, it can be checked that $v(t,\eta)$ satisfies
\begin{equation}\label{d}
\frac{\partial v}{\partial t}= \frac{\partial}{\partial \eta}\left[\Phi\left(\frac{\frac{\partial v}{\partial \eta}}{v^{2+m}}\right)\right] - a, \qquad t>0, \; \eta\in (0,M).
\end{equation}

We are looking for solutions of (\ref{d}) such that 
\begin{equation}\label{cc}
\displaystyle  \Phi\left(\frac{\frac{\partial v}{\partial \eta}(t,0)}{v^{2+m}(t,0)}\right)=-c, \quad \Phi\left(\frac{\frac{\partial v}{\partial \eta}(t,M)}{v^{2+m}(t,M)}\right)=c, \qquad t>0. \end{equation}

\begin{theorem}\label{tb1}
Let $v_0\in W^{1,\infty}(0,M)$, such that $v_0\geq \sigma_1>0$. Then, there exist  $T^d$ with $0<T^d<+\infty$, and a regular solution $v$ of (\ref{d}) with initial datum  $v(0,\eta)=v_0(\eta)$, which is defined in  $(0,T^d)\times (0,M)$  and verifies the boundary conditions (\ref{cc}).
\end{theorem}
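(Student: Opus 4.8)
The plan is to regard \eqref{d} as a quasilinear parabolic equation for $v$ and to construct the solution by a regularization and compactness scheme, the boundary conditions \eqref{cc} being the delicate point since they prescribe the flux exactly at the saturation value $\pm c=\pm\lim_{y\to\pm\infty}\Phi(y)$. Writing $v_\eta=\partial v/\partial\eta$ and expanding the divergence term, \eqref{d} becomes
\begin{equation*}
\frac{\partial v}{\partial t}=\frac{\Phi'\!\left(\frac{v_\eta}{v^{2+m}}\right)}{v^{2+m}}\left(v_{\eta\eta}-(2+m)\,\frac{v_\eta^{2}}{v}\right)-a,
\end{equation*}
whose principal coefficient $\Phi'(v_\eta/v^{2+m})/v^{2+m}$ is positive by \eqref{H1}. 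Thus the equation is uniformly parabolic as long as $v$ stays between two positive constants and the ratio $v_\eta/v^{2+m}$ stays bounded. Exactly two mechanisms can break this: $v$ may reach $0$ (this is what will force $T^d<+\infty$), and at the lateral boundary the condition \eqref{cc} forces $v_\eta/v^{2+m}\to\pm\infty$, so that $\Phi'(v_\eta/v^{2+m})\to0$ by \eqref{H2} and the diffusion degenerates.

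First I would regularize, replacing $c$ by a sequence $c_n\uparrow c$ in \eqref{cc}. With $g=\Phi^{-1}$ as in Remark \ref{R1}, this amounts to the nonlinear Neumann conditions $v_\eta(t,0)=-g(c_n)\,v(t,0)^{2+m}$ and $v_\eta(t,M)=g(c_n)\,v(t,M)^{2+m}$, which prescribe finite boundary slopes. For each fixed $n$ the problem is uniformly parabolic and a classical solution $v_n$ on a short time interval follows from standard quasilinear parabolic theory (linearization together with a Schauder fixed point or the method of continuity, in the spirit of Ladyzhenskaya--Solonnikov--Uraltseva), the lower-order term in $v_\eta^{2}$ being harmless once $v$ and $v_\eta$ are controlled.

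Next I would establish a priori bounds uniform in $n$. For the positivity, at an interior spatial minimum one has $v_\eta=0$ and $v_{\eta\eta}\ge0$, hence $\partial_t v\ge-a$; since \eqref{cc} gives $v_\eta(t,0)<0$ and $v_\eta(t,M)>0$, both endpoints are local maxima of the profile and the minimum is attained in the interior, so $\min_\eta v(t)\ge\sigma_1-at$. This guarantees positivity and fixes a finite $T^d$, e.g. any $T^d<\sigma_1/a$, with $0<T^d<+\infty$. For the gradient, the flux $F:=\Phi(v_\eta/v^{2+m})$ solves the linear uniformly parabolic equation obtained by differentiating \eqref{d} in $\eta$, with boundary values $-c_n$ and $c_n$ and initial datum bounded away from $\pm c$; the maximum principle then yields $|F|\le c$, together with the mass identity $\int_0^M v(t)\,d\eta=\int_0^M v_0\,d\eta+(2c_n-aM)t$ obtained by integrating \eqref{d}. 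On every compact subinterval of $(0,M)$ one expects $|F|$ to stay strictly below $c$, so that $v_\eta/v^{2+m}$ and $\Phi'(v_\eta/v^{2+m})$ are controlled and interior De Giorgi--Nash--Moser and Schauder estimates provide $n$-uniform bounds for $v_n$ and its derivatives away from the lateral boundary.

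Finally, passing to the limit $n\to\infty$ along a subsequence and using the interior compactness yields a function $v$ solving \eqref{d} classically in $(0,T^d)\times(0,M)$ with $v\ge\sigma_1-at>0$. I expect the main obstacle to be the recovery of the boundary conditions \eqref{cc}: one must show that $F=\Phi(v_\eta/v^{2+m})\to\pm c$ as $\eta\to0^{+},M^{-}$, i.e. that the saturation is attained in the limit and that the degeneracy remains confined to the lateral boundary rather than propagating inward. This boundary passage, which requires controlling $v_\eta/v^{2+m}$ near $\eta=0,M$ uniformly in $n$, I would treat by constructing sub- and super-solutions (barriers) for the flux equation near the endpoints, the same comparison technique used for the regularity analysis of Section 2 and in \cite{CCCSS-SUR,CCCSS-EMS,Carrillo}.
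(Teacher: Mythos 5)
Your overall architecture (approximate problem, uniform a priori estimates, compactness, recovery of the saturated boundary condition in the limit) matches the paper's, and several ingredients coincide: the relaxation of the boundary datum ($c_n\uparrow c$ versus the paper's $c-\varepsilon^\kappa$), the lower barrier giving $\min_\eta v(t,\cdot)\ge \sigma_1-at$ and hence the finite $T^d\le\sigma_1/a$, the mass identity, and interior parabolic regularity. But the two steps on which everything hinges are either asserted incorrectly or deferred. First, for fixed $n$ your approximate problem is \emph{not} uniformly parabolic: the principal coefficient $\Phi'\left(v_\eta/v^{2+m}\right)/v^{2+m}$ degenerates as soon as the gradient becomes large, and gradient blow-up in finite time is precisely the phenomenon flux-saturated equations exhibit; relaxing only the boundary value leaves the interior equation degenerate, so ``standard quasilinear theory'' gives at best a classical solution on an interval whose length may shrink to zero as $n\to\infty$, and obtaining a common interval of existence requires exactly the $n$-uniform gradient bound you have not yet proved --- the argument is circular. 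The paper avoids this by adding a vanishing viscosity term $\varepsilon\,\partial^2 v/\partial\eta^2$ to the equation itself (so the fixed-$\varepsilon$ problem is uniformly parabolic no matter how large $v_\eta$ gets), coupling the boundary relaxation $c-\varepsilon^\kappa$ to the same parameter, and modifying $v_0$ near the endpoints so that the compatibility conditions hold (Lemma \ref{v0}); this last point your scheme also needs and omits, since a generic $v_0\in W^{1,\infty}$ does not satisfy your Neumann conditions at $t=0$.

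Second, your interior estimate is vacuous as stated: ``the maximum principle yields $|F|\le c$'' is automatic from \eqref{H1}, since $|\Phi|<c$ pointwise. What is actually needed is that $|F|$ stay \emph{strictly} below $c$ on compact subsets of $(0,M)$, uniformly in $n$ --- equivalently, a uniform interior bound on $v_\eta$ --- and this does not follow from a naive maximum principle: differentiating \eqref{d}, the equation satisfied by $F$ carries the zeroth-order source term $a(2+m)\Phi'(z)z/v$ (with $z=v_\eta/v^{2+m}$), which has indefinite sign and can push $F$ toward saturation over time. This is where the paper does its real work: an explicit super-solution $B(t)+h(\eta)$ built from $G=\int\Phi^{-1}$ for the $L^\infty$ bound, and a Bernstein-type estimate for $\omega=\tfrac12 (v_\eta)^2\varphi^2$, which closes precisely because $\Phi'(s)s^2$ and $\Phi'(s)s^3$ are bounded (this is where $\alpha\ge2$ in \eqref{H2} enters) and because the modified boundary conditions give $\varepsilon (v_\eta)^2\le v^{2+m}$ on the lateral boundary, allowing the maximum principle to control the $\varepsilon$-dependent terms. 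Your concluding appeal to ``barriers near the endpoints'' names the right family of tools but defers the entire core of the proof; until such barriers (or a Bernstein-type estimate) are constructed, the proposal has a genuine gap.
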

\begin{proof} Let $\varepsilon>0$ and $T>0$ be fixed. We first consider the following approximate problem:

\begin{equation}\label{da}
\frac{\partial v}{\partial t}=\frac{\partial}{\partial \eta} \left[\Phi\left(\frac{\frac{\partial v}{\partial \eta}}{v^{2+m}}\right)\right] -a + \varepsilon \frac{\partial^2 v}{\partial \eta^2}, \qquad t \in (0,T), \; \eta\in (0,M),
\end{equation}
\begin{equation}\label{cca}
\Phi\left(\frac{\frac{\partial v}{\partial \eta}(t,\eta)}{v^{2+m}(t,\eta)}\right)+ \varepsilon \frac{\partial v}{\partial \eta}(t,\eta)=(-1)^{1-\frac \eta M}(c- \varepsilon^\kappa), \; \eta\in\{0, M\}, \; t\in (0,T).
\end{equation}
with $\kappa >0$ to be determined.

Then, we proceed in several steps:

\medskip

\noindent
\underline{Step  1}: {\em $L^{\infty}$ bounds independent of $\varepsilon$}.
For every $\varepsilon >0$ we define
$$
h( \eta):= \frac{1}{A}\left(G(A\eta-c+\varepsilon^\lambda ) - G(c-\varepsilon^\lambda)\right),
$$
with  $A=\frac{2(c-\varepsilon^\lambda)}{M},$
and $\lambda >0$ to be chosen.
Hypotheses (\ref{H1}) y (\ref{H2}) imply that $h \in C^3[0,M] $ is symmetric with respect to $\eta=M/2, \; h(0)=0$, and 
$$
h(\eta)\geq h(M/2)=\frac{-1}{A}G(c-\varepsilon^\lambda)\to \frac{-MG(c)}{2c}, \; \mbox{ as } \varepsilon \to 0.
$$
 Therefore, $h(\eta)>  \frac{-MG(c)}{c}, \; \eta \in [0,M]$, as $\varepsilon$ is small enough.

We are going to build a super-solution of (\ref{da})-(\ref{cca}) in the form
$$
\overline{V}(t, \eta)=B(t) +h(\eta), $$
being $B(t)$ a strictly increasing function to be determined, with 
$$B(0) \> \max \left\{ \frac{MG(c)}{c}, \|v_0\|_{\infty}\right\}.$$

As a consequence of Remark \ref{R1}, we have 
\begin{equation}\label{h'}
\left|\frac{\partial \overline{V}}{\partial \eta} (t,\eta)\right|= | h'(\eta)| =|g(A\eta -c+\varepsilon^\lambda)| \leq g (c-\varepsilon^\lambda)\leq C_1  \varepsilon^{(-\beta +1) \lambda},
\end{equation}
and
\begin{equation}\label{h''}
\left|\frac{\partial^2 \overline{V}}{\partial \eta^2} (t,\eta)\right|= |h''(\eta)|= A g'(A\eta -c +\varepsilon^\lambda)\leq C_2 \varepsilon^{-\beta \lambda},
\end{equation}
as $\varepsilon$ is small enough, for all $t>0$ and $\eta \in (0,M)$, where for convenience we have put $\beta=\frac{\alpha+1}{\alpha}$ from now on. This yields to
$$
\left|\varepsilon \frac{\partial^2 \overline{V}}{\partial \eta^2} (t,\eta)\right|\to 0, \quad \mbox{ as } \quad \varepsilon \to 0 ,
$$
provided that $\lambda< \frac 1 \beta$. In particular, it follows $\left|\varepsilon \frac{\partial^2 \overline{V}}{\partial \eta^2} (t,\eta)\right|\leq C_3, \; t>0, \;\eta \in (0,M)$.

On the other hand, we have
$$
\frac{\partial}{ \partial \eta}\left(\Phi\left( \frac{\frac{\partial \overline{V}}{\partial\eta}}{\overline{V}^{2+m}}\right)\right) = \Phi' \left( \frac{h'}{\overline{V}^{2+m}}\right)\left[ \frac{h''}{\overline{V}^{2+m}}-\frac{(2+m)(h')^2}{\overline{V}^{3+m}}\right].
$$
Since  $\Phi'$ and $\overline{V}$ are both positive functions, we deduce
\begin{equation}\label{phi}
\frac{\partial}{ \partial \eta}\left(\Phi\left( \frac{\frac{\partial \overline{V}}{\partial\eta}}{\overline{V}^{2+m}}\right)\right) \leq \Phi' \left( \frac{h'}{\overline{V}^{2+m}}\right)\frac{h''}{\overline{V}^{2+m}} \leq C_4,
\end{equation}
where we have used $h''(\eta)= Ag'(A\eta-c+\varepsilon^\lambda)=\frac{A}{\Phi'(g(A\eta-c+\varepsilon^\lambda)}$. This implies
$$
\Phi' \left( \frac{h'}{\overline{V}^{2+m}}\right)\frac{h''}{\overline{V}^{2+m}}= \frac{A}{\overline{V}^{2+m}}\frac{\Phi' \left( g(A\eta-c+\varepsilon^\lambda)/\overline{V}^{2+m}\right)}{\Phi'(g(A\eta-c+\varepsilon^\lambda)},
$$
and $\overline{V}>\sigma_1$.

Then, as a consequence of   (\ref{h''}) and (\ref{phi}), and choosing  $\lambda<\frac 1 \beta$, we find 
$$
\frac{\partial}{ \partial \eta}\left(\Phi\left( \frac{\frac{\partial \overline{V}}{\partial\eta}}{\overline{V}^{2+m}}\right)\right) -a +\varepsilon \frac{\partial^2\overline{V}}{\partial\eta^2} \leq C_5,  \qquad t>0, \;\eta \in (0,M),
$$
as $\varepsilon$ is small enough. Now, we choose  $B(t)=C_5 t + B(0)$ in such a way that $\overline{V}(t,\eta)$ verifies
$$
\frac{\partial \overline{V}}{\partial t}\geq \frac{\partial }{\partial \eta} \left(\Phi\left( \frac{\frac{\partial \overline{V}}{\partial\eta}}{\overline{V}^{2+m}}\right)\right) -a +\varepsilon \frac{\partial^2 \overline{V}}{\partial \eta^2} ,  \qquad t>0, \;\eta \in (0,M).
$$
Furthermore, we have
$$
\Phi\left( \frac{\frac{\partial \overline{V}}{\partial\eta}}{\overline{V}^{2+m}}\right) + \varepsilon \left.  \frac{\partial \overline{V}}{\partial \eta} \right|_{\eta=0}= \Phi\left(\frac{-g(c-\varepsilon^\lambda)}{B(t)^{2+m}}\right) - \varepsilon g(c-\varepsilon^\lambda).
$$
As a consequence of (\ref{H1}) and (\ref{H2}), there exists $K>0$ such that 
\begin{equation}\label{c}
\Phi(u) \geq c-K/u, \; u>0.
\end{equation}
Therefore,  fixing $T>0$ the following estimate
\begin{eqnarray*}
\displaystyle \Phi\left( \frac{\frac{\partial \overline{V}}{\partial \eta}}{\overline{V}^{2+m}}\right) + \varepsilon \left.  \frac{\partial \overline{V}}{\partial \eta} \right|_{\eta=0} &\leq& -c +K \frac{B(t)^{2+m}}{g(c-\varepsilon^\lambda)}- \varepsilon g(c-\varepsilon^\lambda) \\ &\leq& -c +C_6\varepsilon^{\lambda(\beta -1)} - C_7\varepsilon^{1-\lambda(\beta -1)},
\end{eqnarray*}
holds, for all $t\in [0,T]$.
It is enough to choose $\lambda < \frac 2 3(\beta -1)$ and set $\kappa=\frac \lambda 2 (\beta -1)$ in order to obtain 
\begin{eqnarray*}
\displaystyle \Phi\left( \frac{\frac{\partial \overline{V}}{\partial \eta}}{\overline{V}^{2+m}}\right) + \varepsilon \left.  \frac{\partial \overline{V}}{\partial \eta} \right|_{\eta=0}\leq -c+\varepsilon^\kappa, \; t\in [0,T].
\end{eqnarray*}
Analogously, we also have
\begin{eqnarray*}
\Phi\left( \frac{\frac{\partial \overline{V}}{\partial \eta}}{\overline{V}^{2+m}}\right) + \varepsilon \frac{\partial \overline{V}}{\partial \eta} \mid_{\eta=M}\geq c-\varepsilon^\kappa, \quad t\in [0,T].
\end{eqnarray*}

In conclusion,  the function $\overline{V}(t,\eta)$ is  a super-solution of (\ref{da})--(\ref{cca}) in $[0,T]\times (0,M), \; T>0$, as long as  $\lambda<\min\{\frac{\alpha + 1} \alpha, \frac 2 {3\alpha}\}$.

It is clear that the function $\underline{V}(t)=\sigma_1-at$ is a sub-solution, which is positive in $[0,T^d)$, with $T^d=\sigma_1/a$.

In particular, if $v$ is a solution of (\ref{da})--(\ref{cca}) with $v(0,\eta)=v_0(\eta)$, then, fixing $0<T<T^d$, we have
$$
0<\sigma_1-aT\leq \underline{V}(t)\leq v(t,\eta)\leq \overline{V}(t,\eta), \quad t\in [0,T],  \; \eta\in (0,M).
$$

\medskip

\noindent
\underline{Step  2}: {\em $L^{1}$  bounds of $\frac{\partial v^p}{\partial \eta}$ independent of $\varepsilon$}.
Integrating (\ref{da}) and using the boundary conditions (\ref{cca}), we obtain
\begin{equation}\label{f1}
\int_0^M v(t,\eta)\, d\eta = \int_0^M v(0,\eta)\, d\eta -aMt+2(c-\varepsilon^\kappa)t.
\end{equation}
Therefore, taking into account the estimates obtained in the previous step, we deduce that $v(t,\cdot)\in L^p(0,M)$, for any $1\leq p \leq \infty$ and $t\geq 0$.
On the other hand, fixing $p\geq 1$ and $t>0$, multiplying (\ref{da}) by $v^p$ and integrating in $(0,t) \times (0,M)$, it follows
\begin{eqnarray*}
 \int_0^t\int_0^M \Phi\left( \frac{\frac{\partial v}{\partial \eta}}{v^{2+m}}\right) \frac{\partial v^p}{\partial \eta} \, d\eta \, dt &+& \varepsilon p \int_0^t\int_0^M v^{p-1} \left(\frac{\partial v}{\partial \eta}\right)^2 \, d\eta \, dt 
 \\ &=&
\frac{1}{p+1}\int_0^M \left( v^{p+1}(0,\eta) - v^{p+1}(t,\eta)\right)\, d\eta 
\\
& & + \ (c-\varepsilon^\kappa)\int_0^t \left( v^{p}(s,0) + v^{p}(s,M)\right) \, ds -aMt. \label{ep0sop}
\end{eqnarray*}
Using (\ref{c}), the following lower bound 
$$ \int_0^t\int_0^M \Phi\left( \frac{\frac{\partial v}{\partial \eta}}{v^{2+m}}\right)\frac{\partial v^p}{\partial \eta} \, d\eta \, dt \geq c  \int_0^t\int_0^M \left|\frac{\partial v^p}{\partial \eta}\right| \, d\eta \, dt - Kp \int_0^t\int_0^M v^{p+m+1} \, d\eta \, dt,
$$
holds, and as a consequence we have
\begin{equation}
 \int_0^t\int_0^M \left|\frac{\partial v^p}{\partial \eta}\right| \, d\eta \, dt \leq C(t, p), \quad \mbox{for all} \; p\in [1,\infty), \; \mbox{and}\; t \geq 0.
 \label{distribucional}
\end{equation}
\medskip

\noindent 
\underline{Step  3}: {\em Existence of regular solutions to the Cauchy problem}. As a consequence of the a priori estimates obtained in Step 1,  we can use the regular flux 
$$
a_T(v,\frac{\partial v}{\partial \eta}):=\Phi\left( \frac{\frac{\partial v}{\partial \eta}}{\sup\{\sigma_1-aT,v^{2+m}\}}\right),
$$
for a fixed  $0<T<T^d$, in order to prove the existence of regular solutions of (\ref{da})--(\ref{cca}) in $[0,T]\times (0,M)$ following  classical results, for instance Theorem 13.24 in \cite{Li}. We remark that the initial condition $v_0$ has to be modified if it does not check the compatibility conditions (\ref{cca}). To make this modification we will use the following result.
\begin{lemma}\label{v0} Let $\delta_0\in(0,M)$ fixed. Then, there exists  $\tilde{\varepsilon}\in (0,\delta_0)$ such that  the following estimates 
\begin{eqnarray*}
\Phi\left(\frac{B(\varepsilon)}{[v_0 (0)]^{2+m}}\right) + \varepsilon B(\varepsilon) &>&c-\varepsilon^\kappa, \\
\Phi\left(\frac{B(\varepsilon)}{[v_0 (\delta_0)+\delta_0B(\varepsilon)]^{2+m}}\right) + \varepsilon B(\varepsilon) &<&c-\varepsilon^\kappa,
\end{eqnarray*}
hold for  $0<\varepsilon<\tilde{\varepsilon},$ and $ B(\varepsilon):=\frac{c-\varepsilon^\kappa}{2\varepsilon}$.
\end{lemma}
\proof It is enough to observe that $B(\varepsilon) \to +\infty$ as $\varepsilon \to 0$.

As a consequence, for each $\varepsilon<\tilde{\varepsilon}$, there exists $\delta(\varepsilon) \leq \delta_0$ such that
\begin{equation}\label{ev0}
\Phi\left(\frac{B(\varepsilon)}{[v_0 (\delta)+\delta B(\varepsilon)]^{2+m}}\right) + \varepsilon B(\varepsilon) = c-\varepsilon^\kappa.
\end{equation}
The function
$$
v_{0,\varepsilon}(\eta):=\left\{\begin{array}{ll}
v_0(\delta)+B(\varepsilon)(\delta-\eta), & \eta\in[0,\delta],\\
v_0(\eta), & \eta\in(\delta, M],
\end{array}\right.
$$
fulfills the boundary condition (\ref{cca}) in $\eta=0$. A similar construction allows modifying the initial condition $v_0$ so that $v_{0,\varepsilon}$ verifies (\ref{cca}) in $\partial(0,M)$ and
$$
\sup_{0<\varepsilon<\tilde{\varepsilon}} \varepsilon \|v'_{0,\varepsilon}\|_\infty <\infty.
$$

Denoting $v_\varepsilon$ the solution of (\ref{da})-(\ref{cca}) that meets the initial condition $v_\varepsilon(0,\eta)=v_{0,\varepsilon}(\eta)$, we deduce that $v_\varepsilon$ has first H\"older--continuous derivatives  up to the boundary. In addition, setting $g=v_{\varepsilon\eta\eta},\, v_{\varepsilon t}$, we have
$$
\sup_{\eta_1\neq \eta_2}\left\{ \min (d((t,\eta_1), \mathcal{P}), d((s,\eta_2), \mathcal{P}))^{1-\nu}\frac{|g(t,\eta_1)-g(s,\eta_2)|}{(|t-s|+|\eta_1-\eta_2|^2)^{\frac \gamma 2}}\right\}<\infty,
$$
for some $\nu, \gamma >0$, where $\mathcal{P}$ is the  parabolic boundary of $(0,T)\times (0,M)$ and $d(\cdot, \mathcal{P})$ is the distance to $\mathcal{P}$. On the other hand, by classical interior regularity results (see \cite{Ladyparabolico}, Chapter V, Theorem 3.1), the solution is infinitely smooth in the interior of the domain. Here, the smoothness bounds depend on $\varepsilon$.

\medskip

\noindent
\underline{Step 4}: {\em An $\varepsilon$--uniform Lipschitz estimate   for $v_\varepsilon$}. 
For simplicity, let us write $v$ instead of $v_\varepsilon$. Given a regular, non-negative test function $\varphi$ with compact support contained in $(0, M)$, we define  
$$\omega= \frac 1 2 \left(\frac{\partial v}{\partial\eta}\right)^2\varphi^2.$$ 
Taking into account that  $v$ solves (\ref{da}), it follows
\begin{eqnarray*}
\frac{\partial \omega}{\partial t}&=&
 \frac{\partial v}{\partial \eta}\varphi^2 \left\{\left(\Phi'\left(\frac{\frac{\partial v}{\partial\eta}}{v^{2+m}}\right)\left[\frac{\frac{\partial^2v}{\partial\eta^2}}{v^{2+m}}-\frac{(2+m)\left(\frac{\partial v}{\partial\eta}\right)^2}{v^{3+m}}\right]\right)_\eta + \varepsilon \frac{\partial^3v}{\partial\eta^3}\right\}
\\
&=& \frac{\partial v}{\partial \eta}\varphi^2 \Bigg\{\Phi''\left(\frac{\frac{\partial v}{\partial\eta}}{v^{2+m}}\right)\left[\frac{\frac{\partial^2v}{\partial\eta^2}}{v^{2+m}}-\frac{(2+m)\left(\frac{\partial v}{\partial\eta}\right)^2}{v^{3+m}}\right]^2\Bigg\}  
\\
&& + \, \frac{\partial v}{\partial \eta}\varphi^2 \Bigg\{\Phi'\left(\frac{\frac{\partial v}{\partial\eta}}{v^{2+m}}\right)\left[\frac{\frac{\partial^3v}{\partial\eta^3}}{v^{2+m}}-\frac{3(2+m)\frac{\partial^2v}{\partial\eta^2}\frac{\partial v}{\partial\eta}}{v^{3+m}}+ \frac{(2+m)(3+m)\left(\frac{\partial v}{\partial\eta}\right)^3}{v^{4+m}}\right] 
\\
&&+ \, \varepsilon \frac{\partial^3v}{\partial\eta^3}\Bigg\}.
\end{eqnarray*}
Since $s\Phi''(s)<0$, the first term of the right--hand side is negative and, therefore, we deduce
\begin{eqnarray*}
\displaystyle \frac{\partial \omega}{\partial t} &\leq& \frac{\partial v}{\partial \eta}\varphi^2 \Phi'\left(\frac{\frac{\partial v}{\partial\eta}}{v^{2+m}}\right)\left[\frac{\frac{\partial^3v}{\partial\eta^3}}{v^{2+m}}-\frac{3(2+m)\frac{\partial^2v}{\partial\eta^2}\frac{\partial v}{\partial\eta}}{v^{3+m}}+ \frac{(2+m)(3+m)\left(\frac{\partial v}{\partial\eta}\right)^3}{v^{4+m}}\right] \\
\displaystyle && \, + \frac{\partial v}{\partial \eta}\varphi^2\varepsilon \frac{\partial^3v}{\partial\eta^3}.
\end{eqnarray*}
On the other hand, we have
\begin{eqnarray*}
\displaystyle \frac{\partial \omega}{\partial\eta} &=& \frac{\partial v}{\partial\eta} \frac{\partial^2v}{\partial\eta^2}\varphi^2 + \left(\frac{\partial v}{\partial\eta}\right)^2\varphi\varphi', \\
\displaystyle \frac{\partial^2 \omega}{\partial\eta^2}&=& \left(\frac{\partial^2v}{\partial\eta^2}\right)^2\varphi^2 + \frac{\partial v}{\partial\eta} \frac{\partial^3v}{\partial\eta^3}\varphi^2+ 4 \frac{\partial v}{\partial\eta} \frac{\partial^2v}{\partial\eta^2}\varphi\varphi' +  \left(\frac{\partial v}{\partial\eta}\right)^2\varphi'^2 +  \left(\frac{\partial v}{\partial\eta}\right)^2\varphi\varphi''.
\end{eqnarray*}
In particular, we find
\begin{eqnarray*}
\displaystyle \frac{\partial v}{\partial\eta} \frac{\partial^2v}{\partial\eta^2}\varphi^2 &=& \frac{\partial \omega}{\partial\eta} - \left(\frac{\partial v}{\partial\eta}\right)^2\varphi\varphi_{\eta}\leq \frac{\partial \omega}{\partial\eta} + \omega +\frac{ 1}{ 2 } \left(\frac{\partial v}{\partial\eta}\right)^2\varphi'^2,
\\
\displaystyle \frac{\partial v}{\partial\eta} \frac{\partial^3v}{\partial\eta^3}\varphi^2 &=& \frac{\partial^2 \omega}{\partial\eta^2}-\left(\frac{\partial^2v}{\partial\eta^2}\right)^2\varphi^2 - 4 \frac{\partial v}{\partial\eta} \frac{\partial^2v}{\partial\eta^2}\varphi\varphi' -  \left(\frac{\partial v}{\partial\eta}\right)^2\varphi'^2 +  \left(\frac{\partial v}{\partial\eta}\right)^2\varphi\varphi''
\\
\displaystyle &\leq& \frac{\partial^2 \omega}{\partial\eta^2}- \left(\frac{\partial^2v}{\partial\eta^2}\right)^2\varphi^2 +4\left(\frac{\partial v}{\partial\eta}\right)^2\varphi'^2 +\frac{\partial^2v}{\partial\eta^2}^2\varphi^2 
\\
\displaystyle && -  \left(\frac{\partial v}{\partial\eta}\right)^2\varphi'^2 +  \left(\frac{\partial v}{\partial\eta}\right)^2\varphi\varphi'' 
\\
\displaystyle &\leq&\frac{\partial^2 \omega}{\partial\eta^2}+\omega + \left(\frac{\partial v}{\partial\eta}\right)^2 \left(3\varphi'^2+ \varphi''^2\right).
\end{eqnarray*}
Then, combining the above inequalities and taking into account that $\Phi'(s)$ and $v$ are positive functions, we deduce
\begin{equation}\label{omega2}
\displaystyle \frac{\partial \omega}{\partial t} \leq A(t,\eta) \frac{\partial^2 \omega}{\partial\eta^2} + B(t,\eta)\frac{\partial \omega}{\partial\eta}+C(t,\eta)\omega +f(t, \eta),
\end{equation}
with
$$
\begin{array}{l}
\displaystyle A(t,\eta)= \frac{1}{v^{2+m}} \Phi'\left(\frac{\frac{\partial v}{\partial\eta}}{v^{2+m}}\right) +\varepsilon,\\
\displaystyle B(t,\eta)= \frac{3(2+m)|\frac{\partial v}{\partial\eta}|}{v^{3+m}}\Phi'\left(\frac{\frac{\partial v}{\partial\eta}}{v^{2+m}}\right),\\
\displaystyle C(t,\eta)=  A(t, \eta) + B(t,\eta)+ \frac{2(2+m)(3+m)\left(\frac{\partial v}{\partial\eta}\right)^2}{v^{4+m}}\Phi'\left(\frac{\frac{\partial v}{\partial\eta}}{v^{2+m}}\right),\\
\displaystyle f(t,\eta)=\left(\frac{\partial v}{\partial\eta}\right)^2\left(A(t,\eta)\left(3 \varphi'^2+ \frac 1 2\varphi''^2\right)+\frac 1 2 B(t,\eta)\varphi'^2\right).
\end{array}
$$
As a consequence of Step 1 and the hypotheses on $\Phi$, which in particular imply that $\Phi '(s) s^2$ is bounded, it is easily deduced that the functions $A, B $ and $ C $ are bounded independently of $\varepsilon$, and $ A $, and therefore also  $C$, are strictly positive. 

The term A$\left(\frac{\partial v}{\partial\eta}\right)^3\Phi'\left(\frac{\frac{\partial v}{\partial\eta}}{v^{2+m}}\right)$ that appears in the definition of $f$ is bounded because $\beta\leq 3/2$, since $\alpha \geq 2$ due to (\ref{H2}), and $\Phi'(s)s^3$ is bounded. We need to find an estimate of $\varepsilon \left(\frac{\partial v}{\partial\eta}\right)^2$ independent of $\varepsilon$. To this aim, if we put $\varphi=1$ in (\ref{omega2}), we get inside of $(0,M)$
$$
\displaystyle \frac{\partial \omega}{\partial t} \leq A(t,\eta) \frac{\partial^2 \omega}{\partial\eta^2} + B(t,\eta)\frac{\partial \omega}{\partial\eta}+C(t,\eta)\omega.
$$

 On the other hand, taking into account the boundary conditions (\ref{cca}), there follows 
$$
c\left|\frac{\partial v}{\partial\eta} \right| - v^{2+m} +\varepsilon \left(\frac{\partial v}{\partial\eta}\right)^2 \leq  \Phi'\left(\frac{\frac{\partial v}{\partial\eta}}{v^{2+m}}\right) \left| \frac{\partial v}{\partial\eta}\right| +\varepsilon \left(\frac{\partial v}{\partial\eta}\right)^2 \leq (c-\varepsilon^\kappa)\left|\frac{\partial v}{\partial\eta}\right|.
$$

Hence, $\varepsilon \left(\frac{\partial v}{\partial\eta}\right)^2(t,\cdot)\leq v^{2+m}(t,\cdot)$ and consequently
$$
\varepsilon \omega(t, \cdot)\leq C, \;\mbox{on}\; (0,T^d)\times \partial (0,M).
$$
Then, the maximum principle shows that
\begin{equation}\label{mp}
\varepsilon\left\|\frac{\partial v}{\partial\eta}(t,\cdot)\right\|^2_{L^\infty(0,M)}\leq \tilde{C}, \;\mbox{in}\; (0,T^d)\times  (0,M),
\end{equation}
where $\tilde{C}$ is a constant independent of $\varepsilon$.
Therefore, we can deduce uniform bounds for $f(t,\eta)$ independent of $\varepsilon$ in $(0,T) \times (0,M)$, for any $0<T<T^d$. 

We have obtained local Lipschitz bounds on $\frac{\partial v}{\partial\eta}$ which are uniform in $\varepsilon$ and hold for $t \in (0,T^d)$.

\medskip

\noindent
\underline{Step 5}: {\em Conclusion}.  The smoothness results stated in Step 3 and the local uniform bounds on the gradient shown in Step 4 allow us to apply the classical interior regularity results in \cite{Ladyparabolico} in order to show uniform--in--$\varepsilon$ interior bounds for any space and time derivative of $v_\varepsilon$. A process of passage to the limit similar to that made  in \cite{CCCSS-SUR,CCCSS-EMS,Carrillo} concludes the proof.
\end{proof}

In order to simplify the notation from now on we consider $T=T^d$ in both dual and original problems.

\begin{remark}
 Let us comment some estimations obtained on the last proof, whose validity is independent of $\varepsilon$ and will be useful in the next section. 
 The limit as $\varepsilon\to 0$ in (\ref{f1}) leads to 
 \begin{equation}
\int_0^M v(t,\eta)\, d\eta = \int_0^M v_0(\eta)\, d\eta +(2c-aM)t.
\label{f1e0}
\end{equation}
 On the other hand choosing $p=1$ and $t=T^d$ in (\ref{distribucional}) we have
\begin{equation}
 \int_0^ {T^d}\int_0^M \left|\frac{\partial v}{\partial \eta}(t,\eta )\right| \,
 d\eta \, dt <\infty.
 \label{distribucionale0}
\end{equation}
 
\end{remark}

\section{Rankine--Hugoniot jump conditions and  possible blow--up}

Admissible bounded  solutions develop and transport jump discontinuities, but 
the structure of the singularities that a solution may eventually display is strongly restricted and is characterized by having a vertical profile that moves according to a  
Rankine--Hugoniot law.   
The Rankine--Hugoniot condition roughly says that singularity moves at speed 
\begin{equation}
\label{rhcond}
V(t)= \frac{\mathcal{F}(t,x^+)-\mathcal{F}(t,x^-)}{u(t,x^+)-u(t,x^-)},
\end{equation}
where ${\mathcal{F}}$ denotes the flux associated with the problem under consideration and $u$ is the density. 
Here, the  flux--saturated equations (including those of porous media type) are not much different from scalar conservation laws.

On the other hand,  the jump discontinuities fulfilling 
the Rankine--Hugoniot conditions can either spread out or move towards
a concentration of the mass depending on the constants of the system:
the total mass, the flux--saturated characteristic speed $c$, and the chemoattractant sensitivity constant $a$.
 If $u$ stops being bounded in $L^\infty$ we will say that a blow-up has occurred.

We are going to specify these concepts, in particular distributional and entropy solutions, in our specific case.

\subsection{Distributional framework and moving fronts}

Given an initial datum $u_0\in L^\infty(\R)$ with compact support, we intend to give a solution to the Cauchy problem meaning for the differential equation (\ref{s}) defined
in $\overline{Q_T}$, with $Q_T=]0,T[\times \R$, for some  $T>0$.  We  restrict ourselves to solutions with only two  propagation fronts,
that is, we are going to characterize two functions
$\sigma_+,\, \sigma_- \in C^1([0,T]), \; \sigma_- (t)<\sigma_+ (t), \,0\leq t\leq T$, 
and a solution $u : \overline{Q_T} \to \RR^+$
such that  $u(t,\cdot)\in L^{\infty}(\R)$, and $u(t,x)=0$ if $x\notin [\sigma_- (t), \sigma_+ (t)]$, a.e. $t\in ]0,T[$. Note that $ S: Q_T \to \R $
is being built from $ u $. That is, since  $u(t,\cdot )\in L^p(\R)$ for every $p\in ]1,+\infty[$ and a.e. $t\in ]0,T[$,
then it is possible to determine $S(t, \cdot) \in W^{2,p}(\R)$ by solving
\begin{equation}
-S_{xx}=u(t,x), \, x\in \R, \quad |S| \to 0, \mbox{ as } |x| \to \infty. 
\label{second}
\end{equation}
Once $S$ is known we give a distributional sense to (\ref{s}) by imposing 
\begin{equation}
 \int_0^T\int_{\R}u \frac{\partial \psi}{\partial t} \,dx\,dt -
\int_0^T\int_{\R}u\left[\Phi\left(u^{m-1}\frac{\partial u}{\partial x}\right) - a S_x\right] 
\frac{\partial \psi}{\partial x}\,dx\,dt  = 0,
\label{distr}
\end{equation}
for all $\psi \in C^1_{0}(Q_T)$. When $t=0$ belongs to the support of $\psi$, we also recover  the initial datum in the distributional formulation \eqref{distr}. 
The  integrals in \eqref{distr} make sense if
\begin{equation}
\int_0^T \int _{\R}\left | \frac{\partial u}{\partial x}(t,x) \right | dxdt<\infty.
\label{integral}
\end{equation} 
Also, we have    $u(t, \cdot)\in W^{1,1}(\sigma_- (t), \sigma_+ (t))$, a.e. $t\in ]0,T[$.

\begin{remark} Note that   $ \int _{\R} \left | \frac{\partial u}{\partial x}(t,x) \right | dx<\infty$, a.e. $t\in ]0,T[$, can be
seen as the measure of the variation
of the absolutely continuos part of $D_x u$ in the sense of distributions, for $u(t,\cdot)$. The singular
part can be written as $u(t,\sigma_-(t))^+  + u(t,\sigma_+(t))^-$, which makes sense since 
$W^{1,1}(\sigma_- (t), \sigma_+ (t))\subset C^0([\sigma_- (t), \sigma_+ (t)])$. Then, $u\in L^1(0,T; \, BV(\R))$. 
\end{remark}

As usual,  
$$
\Sigma=\{ (t,x)\in Q_{T} : x=\sigma_{\pm}(t) \} 
$$
denotes the singular set (points of discontinuity) of $u$. Let us prove the following result
\begin{theorem}
Let $(\xi_-, \xi_+)$ an open interval and  $u_0\in W^{1,\infty} (\xi_-, \xi_+)$  such that $\displaystyle \inf_{(\xi_-, \xi_+)}u_0(x)>0$. 
There exists  a time $T>0$ and a distributional solution $u$ of (\ref{s}) with two fronts defined in $\overline{Q_T}$,
which satisfies 
 $$
 u\in C^2 \left ( {Q}_{T}\setminus \Sigma \right )\cap  C^0  \left (\overline{ Q_{T}}\setminus \Sigma \right )
 $$
 and 
 $$
 u(0,x)=\left \{ \begin{array}{ll}
          u_0(x), & \mbox{ if }x\in (\xi_-, \xi_+),\\
          0, & \mbox{ otherwise. }
         \end{array}\right .
$$
 \end{theorem}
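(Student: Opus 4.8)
The plan is to construct the two-front distributional solution of (\ref{s}) by transporting back the regular solution $v$ of the dual problem (\ref{d})--(\ref{cc}) produced in Theorem \ref{tb1}, and then to read off the claimed regularity and initial condition from the properties of $v$ together with the inversion of the Lagrange mass coordinate (\ref{m}). Concretely, I would first set $v_0(\eta) := 1/u_0(\varphi_0(\eta))$, where $\varphi_0$ is the mass coordinate of $u_0$ determined by $\int_{\xi_-}^{\varphi_0(\eta)} u_0 = \eta$. Because $u_0 \in W^{1,\infty}(\xi_-,\xi_+)$ and $\inf u_0 > 0$, the map $\varphi_0$ is Lipschitz with Lipschitz inverse, so $v_0 \in W^{1,\infty}(0,M)$ and $v_0 \geq \sigma_1 := 1/\|u_0\|_\infty > 0$; this is exactly the hypothesis needed to invoke Theorem \ref{tb1}. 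That theorem then yields a time $T = T^d$ and a regular solution $v$ of (\ref{d}) on $(0,T)\times(0,M)$, strictly positive by the sub-solution bound $v \geq \sigma_1 - at$ in Step~1, and satisfying the boundary conditions (\ref{cc}).

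\textbf{Recovering $u$, $S$ and the fronts.} Next I would invert the change of variables. Define $\varphi(t,\eta)$ by $\partial_\eta \varphi = v$ together with an anchoring of the left front; the natural choice is to set the left front by $\sigma_-(t) := \xi_- + \int_0^t \big(a\,\partial_x S(s,\sigma_-(s)) - \Phi(\cdots)\big)\,ds$ consistent with (\ref{1phi}), and then $\varphi(t,\eta) := \sigma_-(t) + \int_0^\eta v(t,\zeta)\,d\zeta$. The density is recovered by $u(t,\varphi(t,\eta)) := 1/v(t,\eta)$, i.e. $u(t,x) := 1/v(t,\varphi^{-1}(t,x))$ for $x\in(\sigma_-(t),\sigma_+(t))$ and $u \equiv 0$ outside, with $\sigma_+(t) := \varphi(t,M) = \sigma_-(t)+\int_0^M v(t,\zeta)\,d\zeta$. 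Note that (\ref{f1e0}) gives $\sigma_+(t)-\sigma_-(t) = \ell(0) + (2c-aM)t$, the length formula announced in the introduction, so the fronts $\sigma_\pm \in C^1([0,T])$ are well defined and $\sigma_- < \sigma_+$ on $[0,T]$ after possibly shrinking $T$. Since $v$ is smooth and bounded away from zero in the interior, $u \in C^2(Q_T\setminus\Sigma)$ by the chain rule and the smoothness of the inverse map $\varphi^{-1}$; continuity up to the closure away from $\Sigma$, i.e. $u\in C^0(\overline{Q_T}\setminus\Sigma)$, follows from the continuity of $v$ and $\varphi$ up to $t=0$. The chemoattractant $S$ is then defined from $u$ through (\ref{second}), which is solvable in $W^{2,p}(\R)$ because $u(t,\cdot)\in L^p(\R)$ for all $p$.

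\textbf{Verifying the equation and initial data.} The remaining task is to check that the reconstructed pair solves (\ref{s}) in the distributional sense (\ref{distr}). The point is that each algebraic manipulation used to derive (\ref{d}) from (\ref{s}) in Section~2 is reversible on the region where $u>0$: starting from the facts that $v$ solves (\ref{d}) and that $\partial_x u = -\partial_\eta v \cdot v^{-3}$, one recovers (\ref{1phi}) and hence the strong form of the first equation of (\ref{s}) pointwise in $Q_T\setminus\Sigma$. To pass to the weak formulation (\ref{distr}) one integrates by parts against $\psi\in C^1_0(Q_T)$; the only boundary contributions live on $\Sigma$, and these vanish precisely because the flux $u\Phi(u^{m-1}\partial_x u) - aS_x u$ matches the front speed $\dot\sigma_\pm$ through the boundary conditions (\ref{cc}) — this is the Rankine--Hugoniot balance of Section~3, and it is what makes the two-front ansatz consistent. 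The integrability requirement (\ref{integral}) needed for (\ref{distr}) to make sense is supplied by the $L^1$ bound (\ref{distribucionale0}) on $\partial_\eta v$ transported back to $x$. Finally, the initial condition $u(0,\cdot)=u_0$ on $(\xi_-,\xi_+)$ follows from $v(0,\cdot)=v_0 = 1/u_0\circ\varphi_0$ and $\varphi(0,\cdot)=\varphi_0$ by construction.

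\textbf{Main obstacle.} I expect the delicate point to be the regularity and matching of the fronts $\sigma_\pm$ together with the justification that the boundary terms in the integration by parts vanish. The dual construction controls $v$ only in the open strip $(0,M)$, and the boundary conditions (\ref{cc}) are stated as limits of the saturated flux; transferring these into a genuine $C^1$ front law and showing that the distributional flux has no residual jump contribution on $\Sigma$ requires the uniform interior estimates from Steps~4--5 to propagate up to the lateral boundary. Controlling $\partial_x u$ near the fronts — equivalently $\partial_\eta v$ near $\eta\in\{0,M\}$, where $v^{2+m}$ may degenerate as the saturated flux approaches $\pm c$ — is where the hypotheses (\ref{H1})--(\ref{H2}) on the decay of $c-\Phi$ are essential, and I would lean on Remark~\ref{R1} (the $C^3$ regularity and growth of $G$ and $g$) to close this estimate.
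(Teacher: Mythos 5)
Your construction follows the paper's route step for step: pass to the dual problem via the mass coordinate (your $v_0=1/(u_0\circ\varphi_0)$ and the invocation of Theorem \ref{tb1} are exactly the paper's first move), recover $u$ by inverting $\varphi$, obtain the length law from (\ref{f1e0}), and verify (\ref{distr}) using (\ref{distribucionale0}) for (\ref{integral}) together with the saturation/Rankine--Hugoniot balance at $\Sigma$. The genuine gap is in how you anchor the fronts. As written, $\sigma_-(t):=\xi_-+\int_0^t\left(a\,\partial_x S(s,\sigma_-(s))-\Phi(\cdots)\right)ds$ is circular: $S$ is only defined (via (\ref{second})) after $u$ is known, $u$ requires $\varphi^{-1}$, and $\varphi$ requires $\sigma_-$; so this formula is an implicit fixed-point relation, not a definition, and your proposal never breaks the loop. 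The paper's key observation, absent from your argument, is that the right-hand side of the front law can be computed from the dual solution alone: writing $-S_x=\mu-\bar{\mu}$ as in (\ref{r1}), the traces at the fronts are $\mu_-=0$, $\mu_+=M$, the flux saturates there by (\ref{cc}) (equivalently (\ref{satura})), and the change of variables $s=\varphi(t,\eta)$ turns the average (\ref{muval}) into (\ref{muu}), namely $\bar{\mu}(t)=\int_0^M\eta\,v(t,\eta)\,d\eta\,/\,(\xi_+-\xi_-+(2c-aM)t)$, whose denominator is known a priori by (\ref{f1e0}). With this, (\ref{desplaz}) defines $\sigma_\pm$ by direct integration of known functions of $t$, and only then can $\varphi$, $u$ and $S$ be constructed, in that order. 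Without (\ref{muu}) (or a substitute fixed-point argument that you would have to set up and solve), the two-front solution is not actually constructed.

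A second, smaller gap: $u\in C^2(Q_T\setminus\Sigma)$ does not follow from ``the chain rule and smoothness of $\varphi^{-1}$'' alone, because the time regularity of $\varphi(t,\eta)=\sigma_-(t)+\int_0^\eta v(t,\zeta)\,d\zeta$ hinges on the regularity of $\sigma_-$, hence of $\bar{\mu}$. The paper closes this by showing that $t\mapsto\int_0^M\eta\,v(t,\eta)\,d\eta$ is differentiable (integrating equation (\ref{d}) against $\eta$), so that $\bar{\mu}\in C^1$ and consequently $\varphi\in C^2$; this step should appear in your argument as well. Your ``main obstacle'' paragraph correctly senses that the fronts are the delicate point, but the resolution is the dual formula for $\bar{\mu}$ and its $C^1$ regularity, not finer boundary estimates on $\partial_\eta v$ near $\eta\in\{0,M\}$.
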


\begin{proof}  
Let  $v_0 : [0,M]\to (0,\infty)$ be defined by
 $v_0 = \frac{d}{d \eta}\varphi_0$, where  
$$
\displaystyle \int_{\xi_-}^{\varphi_0(\eta)} u_0(s)\, ds=\eta,
$$
then  $v_0\in W^{1,\infty}(0, M)$. Using  Theorem \ref{tb1} we can construct a solution $v(t,\eta)$ of
the  dual problem (\ref{d}), defined on $[0,T]\times [0,M]$, where $M=\displaystyle \int_{\xi_-}^{\xi_+}u_0(x)dx$. 

 Now we are going to define $\sigma_+,\sigma_- :[0,T]\to \R$ and $\varphi :[0,T]\times [0,M]\to \R$ such that 
 $\varphi (t,0)=\sigma_-(t)$ and $\varphi (t,M)=\sigma_+(t)$ in order to determine the solution $u$  solving 
$$
\displaystyle \int_{\sigma_-(t)}^{\varphi(t,\eta)} u(t,s)\, ds=\eta.
$$

Using (\ref{f1e0}) we obtain 
 $$
\displaystyle \sigma_+(t)-\sigma_-(t)=\int_0^M \frac{\partial \varphi}{\partial \eta}(t,s)\,
ds =\int_0^Mv(t, s)\, ds ={\int_0^Mv_0( s)\, ds +(2c-aM)t.}
$$
Note that $\displaystyle \int_0^Mv_0( s)\, ds= \displaystyle \int_0^M\frac{d}{d \eta}\varphi_0(s) ds=\xi_+-\xi_-$. Thus, we find a first relation between the evolution of both fronts
\begin{equation}
 \displaystyle \sigma_+(t)-\sigma_-(t)=\xi_+-\xi_- +(2c-aM)t.
 \label{sopmov}
\end{equation}
 We  remark that $\varphi_0$ is known, but  $\varphi(t,\eta)= \sigma_-(t)+\displaystyle \int _0^\eta v(t,s)ds,$ will not be known until we define $\sigma_-$ or 
 $\sigma_+$. 

A new function has to be defined in order to compute  $\sigma ^+, \sigma^-$, which can be obtained by integrating the second equation of (\ref{s}). First, we define
\begin{equation}
 \displaystyle \mu(t,x)=\int_{\sigma_-(t)}^x u(t,s)\, ds.
 \label{nuii}
\end{equation}
Thus, we have
\begin{equation}\label{r1}
\displaystyle -\frac{\partial S}{\partial x}(t,x)=\mu(t,x) - \bar{\mu}(t),
\end{equation}
where 
\begin{equation}
 \displaystyle \bar{\mu}(t)=\frac 1 {\sigma_+(t)-\sigma_-(t)}\int_{\sigma_-(t)}^{\sigma_+(t)} \mu(t,s)\, ds.
 \label{muval}
\end{equation}
Using the change of variable  $ s = \varphi (t, \eta) $ in (\ref{muval}) we infer
\begin{eqnarray} \label{muu}
 \bar{\mu}(t)=\frac{\displaystyle \int_0^M\mu (t,\varphi (t,\eta))v(t,\eta)d \eta}{\sigma_+(t)-\sigma_-(t)}=
\frac{\displaystyle \int_0^M \eta v(t,\eta)d \eta}{\xi_+-\xi_- +(2c-aM)t},
\end{eqnarray}
which allows us to define  $ \bar{\mu} $ in terms of the dual solution, and  thus deduce that $ \bar{\mu} $ is continuous. 

Finally, we compute $\sigma_+,\, \sigma_-$ by solving 
\begin{equation}
 \left \{\begin{array}{ll}
\sigma'_-(t)= -c+a\bar{\mu}(t), &\qquad  \sigma_+(0)=\xi_+,\\
\sigma'_+(t)= c-a(M-\bar{\mu}(t)),& \qquad \sigma_-(0)=\xi_-.
 \end{array}\right.
\label{desplaz}
\end{equation}
Thus, we obtain two $C^1$ functions.

Let us give a short explanation of how this expression was obtained. As usual, the propagation of singularities is always due to a saturation of the flux. This leads to 
\begin{equation}
 \frac{\partial u}{\partial x}(t, \sigma_-(t))=+\infty, \; \mbox{and }\; \frac{\partial u}{\partial x}(t, \sigma_+(t))=-\infty.
 \label{satura}
\end{equation}
Note that although $ u $ is bounded, this does not prevent its derivatives from being limited. Inserting (\ref{r1}) into equation (\ref{distr}) and taking into account that we are dealing with 
a classical solution of
$$
\displaystyle {\frac{\partial u}{\partial t}}= \frac{\partial}{\partial x}
\left(u\Phi\left(u^{m-1}{\frac{\partial u}{\partial x}}\right) -a(\mu-\bar{\mu})u \right)
$$
out of the singular set $ \Sigma $, we get 
$$
\sum_{ \{ +,-\} }\pm  \int _0 ^T \left[\Phi\left(u_\pm^{m-1}(t)\frac{\partial u_\pm}{\partial x}(t)\right)
+ a (\mu_\pm(t) -\bar{\mu}(t)) - \sigma'_\pm (t)\right]u_\pm (t) \psi_\pm (t)dt=0,
 $$
fo every $\psi \in C^1_{0}(]0,T[ \times \RR )$, where  $u_\pm(t)=u(t,\sigma_\pm(t)^{\mp})$,
$\frac{\partial u_\pm}{\partial x}(t)=\frac{\partial u}{\partial x}(t,\sigma_\pm(t)^{\mp})$,
$\mu_\pm(t)=\mu(t,\sigma_\pm(t))$, and $\psi_\pm (t)=\psi (t,\sigma_\pm(t))$. 

Now, using that $u(t,\sigma_-(t)^{+})=v(t,0)>0$, $u(t,\sigma_+(t)^{-})=v(t,M)>0$ and choosing test functions $\psi$  supported around one  of the branches of $\Sigma$, we find
$$
\Phi\left(u_\pm^{m-1}(t)\frac{\partial u_\pm}{\partial x}(t)\right)
+ a (\mu_\pm(t) -\bar{\mu}(t)) - \sigma'_\pm (t)=0, \quad  \mbox { a.e. } t\in ]0,T[.
 $$
Combining (\ref{nuii}), $\mu(t,\sigma_-(t))=0$,  $\mu(t,\sigma_+(t))=M$ 
and (\ref{satura}),  yields
$$
\begin{array}{l}
 c-a\bar{\mu}(t)-\sigma^\prime_-(t)=0,\\
 -c+a(M-\bar{\mu}(t))-\sigma'_+(t)=0,
 \end{array}
 $$
which gives the motivation for (\ref{desplaz}).

 Once we know $\sigma^+$, $\sigma^-$ and $\varphi$, we define the solution $u(t,x)$ in the following way: solve $x=\varphi(t,\eta)$ and obtain 
$\varphi^{-1}_t : [\sigma_-(t),\sigma_+(t)]\to [0,M]$, then define
$$
u(t,x)=\left \{ \begin{array}{ll}
               \displaystyle  \frac{1}{v(t,\varphi^{-1}_t(x))}, & \quad \mbox{ if } x\in [\sigma_-(t),\sigma_+(t)], \\
                0,    & \quad  \mbox{ otherwise.}
                \end{array}\right .
$$
This can be done because $\frac{\partial \varphi }{\partial \eta}(t,\eta)=v(t,\eta)>0$. Now, in order to prove 
 (\ref{integral}) we perform the change  of variables $x=\varphi(t,\eta)$ and obtain 
\begin{eqnarray*}
 \int_0^T \int _{\R}\left | \frac{\partial u}{\partial x}(t,x) \right | dxdt &=&
\int_0^T \int _{\sigma_-(t)}^{\sigma_+(t)}
\left | \frac{\partial u}{\partial x}(t,x) \right | dxdt \\ &=&
\int_0^T \int _{0}^{M}
\left | \frac{\partial v}{\partial \eta}(t,\eta) \right |\frac{1}{v^2(t,\eta)} d\eta dt<\infty,
\end{eqnarray*}
where we have used (\ref{distribucionale0}) to prove that last integral is finite. 

We have $u\in C^0  \left (\overline{ Q_{T}}\setminus \Sigma \right )$, but  studying 
the regularity properties of the solutions requires that $\varphi=\varphi(t,\eta)\in C^2(]0,T[\times ]0,M[)$. First, let us observe that 
$$
\frac{\partial}{\partial t} \int _0^{\eta } v(t,\delta )d \delta = \Phi \left( \frac { v_{ \eta } (t,\eta) } {v^{2+m}(t,\eta)} \right)-c-a \eta.
$$
Therefore, $\varphi_t\in C^0 ((0,T)\times [0,M])$, and  
\begin{eqnarray*}
\frac{d}{d t} \int _0^{M } \eta  v(t, \eta )d \eta
\label{dermu}
\end{eqnarray*}
is well--defined, which can be deduced from the following computation
\begin{eqnarray*}
 \int _0^{M } \eta  v_t(t,\eta )d \eta&=&\int _0^M \eta \left (\Phi \left ( \frac { v_{ \eta } (t,\eta) } {v^{2+m}(t,\eta)} \right ) \right )_{\eta}d \eta
- a \frac{M^2}{2}\\ &=&Mc- \int _0^M \eta \, \Phi \left ( \frac { v_{ \eta } (t,\eta) } {v^{2+m}(t,\eta)} \right )d \eta- a \frac{M^2}{2} .
\end{eqnarray*}
Then, $t\to \overline{\mu}(t)$ given by \eqref{muu} is a $C^1$ function. This completes the regularity of $\varphi $ because the other derivatives can be computed in the same way. 
This implies that $u\in C^0  \left (\overline{ Q_{T}}\setminus \Sigma \right )$, which is the key tool to verify (\ref{distr}).
\end{proof}
 
  \subsection{Dynamics of the support and possible concentrations}{\label{ss}}
To understand the dynamics of the support and the possible concentration towards a  Dirac mass we are going to make some precisions. Denote $\ell (t)= \sigma_+(t)-\sigma_-(t)$ the  length of the support, where $\ell(0) = \xi_+-\xi_-$. Then 
 $\ell'(t)=2c-aM$,  which raises various possibilities. In the case $M<\frac{2c}{a}$ the solution spreads out and expands and we can think in behaviors of the type $\int_{J}u(t,x) dx\to 0$ as $t\to +\infty$ for every compact interval $J$, or the convergence towards a continuous traveling wave as those studied in the next Section. We can not rule out other possibilities:   we know that the solution is bounded before $T^*$, but not in a possible continuation in time of this solution; neither can we elucidate about the eventual  rupture of the geometry of the solution giving rise to a different phenomenology. In the case $M>\frac{2c}{a}$ the solution concentrates on smaller and smaller supports and it can be conjectured that in $T^*=\frac{\ell(0)}{aM-2c}$ (formal solution of  $\ell (t)=0$) 
the solution is concentrated in a Dirac delta. It is possible that, when the solution becomes zero at the ends of its support, this concentration process leading to the Dirac mass will slow down. Finally, the case $M=\frac{2c}{a}$ the support of the solution has a  fixed length, but  in a continuation in time of the solution, none of the previous possibilities can be discarded.

It is also possible a lateral movement of the support. To see this, we consider the evolution of the central point of the support interval
$$
\sigma_c(t)=\frac{\sigma_+(t)+\sigma_-(t)}{2}.
$$
Using the formulas above, it follows that
$$
\sigma'_c(t)=a(2\bar{\mu}(t)-M),
$$ 
which is related to the distribution of the mass in the interval that defines the support of $u$. To clarify this, let us define the center of mass associated with $u$
$$
\displaystyle {m}(t)=  \frac 1 {M}\int_{\sigma_-(t)}^{\sigma_+(t)} s\,u(t,s)\, ds.
$$
Then, after an integration by parts we find
$$
M {m}(t)=M\sigma_+(t)-(\sigma_+(t)-\sigma_-(t))\bar{\mu}(t),
$$
from which we deduce 
$$
\sigma'_c(t) =\frac{aM}{2(\sigma_+(t)-\sigma_-(t))}\left ( \sigma_c(t)- {m}(t) \right ).
$$
This implies that if the mass is concentrated at one end of its support, then the solution will move to the other one.

\section{Traveling waves}

In this section, we look for solutions of (\ref{s}) in the form  $u(t,x)=\upsilon(x-\sigma t)$, being $\upsilon: \R \to \R$ a  function
with compact support  $[\xi_-, \xi_+]$, such that  $\upsilon(\xi)>0, \; \xi\in [\xi_-, \xi_+]$. In this case,
the support of $u (t,\cdot)$, given by $[\sigma_-(t), \sigma_+(t)]$, satisfies
$$
\sigma_-(t)=\xi_-+\sigma t, \; \sigma_+(t)=\xi_++\sigma t,
$$
and the function
 $$\displaystyle \mu (t,x)=\int_{-\infty}^x u(t,s)\, ds =\int_{\xi_-+\sigma t}^x \upsilon (s-\sigma t)\, ds =\varkappa(x-\sigma t),$$ 
where  $\varkappa:\R \to \R$ is definded as $\varkappa(\xi)=\int_{\xi_-}^\xi \upsilon(s)\, ds$.

It is easy to check that $\overline{\mu}(t)=\bar{\varkappa} +\sigma t$, with 
 $$
 \bar{\varkappa}= \frac 1 {\xi_+-\xi_-} \left (\xi^+ M-  \int_{\xi_-}^{\xi_+} s\upsilon(s)\, ds\right ),
 $$
with $M=\int_{\xi_-}^{\xi_+}\upsilon(s)\, ds$. Inserting into the first equation of (\ref{s}) the profile of the traveling wave, we find
\begin{equation}\label{*azul}
- \sigma \upsilon'=\left( \upsilon\Phi(\upsilon^{m-1}\upsilon')+a(\bar{\varkappa}-\varkappa)\upsilon\right)',\qquad  \xi\in (\xi_-,\xi_+),
\end{equation}
or equivalently
\begin{equation}\label{u}
\upsilon\left( \Phi(\upsilon^{m-1}\upsilon')+a(\bar{\varkappa}-\varkappa)+\sigma\right)=K, \; \mbox{ with } K\in \RR.
\end{equation}

Let us now analyze the different possibilities of the profile attending to its continuity and behavior at the ends of its support.

\medskip 

\noindent {\it Case 1: Continuous profile on the right.} We are looking for a profile fulfilling $u(\xi_+)=0$. In this framework, we have that the constant $K$ takes the value $K=0$ and  
$$
\Phi(\upsilon^{m-1}\upsilon') =-a(\bar{\varkappa}-\varkappa)-\sigma, \quad  \xi\in (\xi_-,\xi_+).
$$
Therefore, for a profile of these characteristics to exist, it is necessary that
$$
-a(\bar{\varkappa}-\varkappa)-\sigma \in [-c,c], \quad  \xi\in (\xi_-,\xi_+).
$$ 
Since $[\mu(\xi_-,t), \mu(\xi_+,t)]=[0,M],$  we deduce that 
\begin{equation}\label{sigma}
a\bar{\varkappa}-c\leq \sigma\leq c+a(\bar{\varkappa}-M).
\end{equation}
From this sequence of inequalities we have
\begin{equation}\label{M}
M\leq \frac{2c}{a}.
\end{equation}

Choosing $M$  such that fits  (\ref{M}) and taking $\sigma$ verifying (\ref{sigma}), the traveling wave profile $\upsilon$ must satisfy the following equation
\begin{equation}\label{umu}
\left\{\begin{array}{l}
\upsilon^{m-1}\upsilon'=-g(a(\varkappa-\bar{\varkappa})+\sigma),\\
\varkappa'=\upsilon.
\end{array}\right.
\end{equation}
Since $u(\xi)>0, \; \xi\in (\xi_-, \xi_+)$, the function $U(\varkappa)=\upsilon(\xi(\varkappa))$ verifies
$$
U^m\frac{dU}{d\varkappa}=-g(a(\varkappa-\bar{\varkappa})+\sigma), \qquad \varkappa\in (0,M), \; U(M)=0.
$$

Integrating this equation, we obtain 
$$
\frac a {m+1} (U(\varkappa))^{m+1}= G(a(M-\bar{\varkappa})+\sigma)-G(a(\varkappa-\bar{\varkappa})+\sigma):=H(U),
$$
where $\displaystyle G(u)=\int_0^ug(s)\, ds$, as previously defined.  

The function $H(\varkappa)$ is well--defined in $[0,M]$ and verifies $H(M)=0$. In order to have $H(\varkappa)>0, \; \varkappa\in (0,M)$, the following conditions are required: 
\begin{itemize}
\item $\sigma \geq a(\bar{\varkappa}-M)$, which yields $H'(M)\leq 0$, 
\item  $\varkappa^*= \sup \{ \varkappa <M:\, H(\varkappa)=0\} \leq 0$.
\end{itemize}
It is a simple matter to check that  $\varkappa^*=2\bar{\varkappa}-M-2\sigma/a$. Therefore, $\varkappa^*\leq 0$ only if 
\begin{equation}\label{sigma2}
\sigma\geq a\left( \bar{\varkappa}- \frac M 2\right)>a\left( \bar{\varkappa}- M\right).
\end{equation}
But if (\ref{M}) is verified, then $a( \bar{\varkappa}- \frac M 2)\geq a\bar{\varkappa}-c$ holds.  We have proved the following result.
\begin{lemma}\label{ov1}
Let $M>0$ verifying (\ref{M}) and   $0<\bar{\varkappa}<M$, and  let $\sigma$ be such that 
\begin{equation}\label{sigma3}
a\left( \bar{\varkappa}- \frac M 2\right)\leq \sigma \leq c+a(\bar{\varkappa}-M).
\end{equation}
Then, the functions
\begin{equation}\label{29'}
\begin{array}{l}
\displaystyle \upsilon(\xi):= \left[\frac{m+1}{a}\left(G(a(M-\bar{\varkappa})+\sigma)-G(a(\varkappa(\xi)-\bar{\varkappa})+\sigma)\right)\right]^{1/(m+1)}, 
\\ \displaystyle \varkappa(\xi)=\int_{\xi_-}^\xi \upsilon(\tau)\, d\tau,
\end{array}
\end{equation}
which are defined in the interval $[\xi_-, \xi_+]$, are solutions of (\ref{umu}) in $(\xi_-, \xi_+)$ and  $\upsilon(\xi_+)=0, \; \varkappa(\xi_+)=M$ hold. 
\end{lemma}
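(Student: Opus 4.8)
The plan is to consolidate the computation already carried out above into a rigorous argument by using the mass variable $\varkappa$ as the independent variable. Writing $U(\varkappa)=\upsilon(\xi(\varkappa))$ and using $\varkappa'=\upsilon$, the system (\ref{umu}) collapses to the single separable ODE $U^{m}\,\frac{dU}{d\varkappa}=-g\big(a(\varkappa-\bar{\varkappa})+\sigma\big)$ on $(0,M)$ with terminal condition $U(M)=0$ (equivalently $\upsilon(\xi_+)=0$). Integrating from $\varkappa$ to $M$ and using $G'=g$ together with the linear change of variable $s=a(\varkappa'-\bar{\varkappa})+\sigma$ in the argument produces exactly $\frac{a}{m+1}U(\varkappa)^{m+1}=G(a(M-\bar{\varkappa})+\sigma)-G(a(\varkappa-\bar{\varkappa})+\sigma)=:H(\varkappa)$, which is the content of (\ref{29'}). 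Thus the lemma reduces to showing that this formula defines a genuine nonnegative profile and that the reconstructed $\upsilon(\xi)$ solves (\ref{umu}) with the stated endpoint data.

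First I would check that the formula makes sense, i.e. that every argument of $G$ and $g$ stays in the domain $[-c,c]$. Set $A:=a(M-\bar{\varkappa})+\sigma$ and $B(\varkappa):=a(\varkappa-\bar{\varkappa})+\sigma$, so that $B$ is affine and increasing with $B(M)=A$. The upper bound in (\ref{sigma3}) is precisely $A\le c$, controlling the right end; for the left end, the lower bound in (\ref{sigma3}) together with (\ref{M}) gives $B(0)=\sigma-a\bar{\varkappa}\ge -aM/2\ge -c$. Hence $B(\varkappa)\in[-c,c]$ on $[0,M]$, and since $G\in C[-c,c]$ by Remark~\ref{R1}, $H$ is well defined and continuous on $[0,M]$ with $H(M)=0$.

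The crux is the positivity $H(\varkappa)>0$ on $(0,M)$. Here I would exploit that, by Remark~\ref{R1}, $G$ is even and strictly increasing in $|\,\cdot\,|$ (because $g>0$ on $(0,c)$). Since the lower bound in (\ref{sigma3}) forces $A\ge aM/2>0$, we have $H(\varkappa)>0$ if and only if $|B(\varkappa)|<A$, i.e. $-A<B(\varkappa)<A$. The inequality $B(\varkappa)<A$ holds for $\varkappa<M$ because $B$ is increasing with $B(M)=A$, while $B(\varkappa)>-A$ for all $\varkappa\ge 0$ reduces, by monotonicity of $B$, to $B(0)\ge -A$, which a direct computation shows is equivalent to $\sigma\ge a(\bar{\varkappa}-M/2)$; this is exactly the condition $\varkappa^{*}\le 0$ for the second zero $\varkappa^{*}=2\bar{\varkappa}-M-2\sigma/a$ of $H$. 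I would also record that (\ref{M}) is precisely the condition under which the admissible interval (\ref{sigma3}) for $\sigma$ is nonempty, since $a(\bar{\varkappa}-M/2)\le c+a(\bar{\varkappa}-M)$ is equivalent to $M\le 2c/a$.

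Finally, with $U(\varkappa)=\big[\tfrac{m+1}{a}H(\varkappa)\big]^{1/(m+1)}>0$ on $(0,M)$ and $U(M)=0$, I would reconstruct the profile in the original variable by integrating $\frac{d\xi}{d\varkappa}=1/U(\varkappa)$, i.e. $\xi(\varkappa)=\xi_-+\int_0^{\varkappa}U(s)^{-1}\,ds$, inverting to obtain $\varkappa(\xi)$ and setting $\upsilon(\xi)=U(\varkappa(\xi))$; differentiating back recovers $\varkappa'=\upsilon$ and $\upsilon^{m-1}\upsilon'=U^{m}U'=-g(a(\varkappa-\bar{\varkappa})+\sigma)$, so (\ref{umu}) holds and $\upsilon(\xi_+)=U(M)=0$, $\varkappa(\xi_+)=M$. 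The one genuinely delicate point, and the one I expect to be the main obstacle, is the finiteness of $\xi_+$, which hinges on the integrability of $1/U$ near $\varkappa=M$: since $H$ vanishes linearly there ($H'(M)=-a\,g(A)<0$), one has $U\sim (M-\varkappa)^{1/(m+1)}$, so $1/U$ is integrable precisely when $m>0$, whereas for the relativistic case $m=0$ the right tail extends to $+\infty$; an analogous care is needed near $\varkappa=0$ in the borderline case $\varkappa^{*}=0$, the positivity of $H$ being otherwise the heart of the matter.
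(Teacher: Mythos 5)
Your argument is correct and follows the same route as the paper, whose proof of Lemma \ref{ov1} is exactly the computation preceding its statement: reduction to the mass variable $U(\varkappa)$, integration to $\frac{a}{m+1}U^{m+1}=H(\varkappa)$, and positivity of $H$ on $(0,M)$ via $H'(M)\leq 0$ together with $\varkappa^{*}=2\bar{\varkappa}-M-2\sigma/a\leq 0$; your criterion $|B(\varkappa)|<A$ is these two conditions repackaged through the evenness and strict monotonicity of $G$. You are more careful than the paper on the points it leaves implicit: that all arguments of $G$ remain in $[-c,c]$ (the paper only records this as the necessary condition (\ref{sigma})), and the explicit reconstruction of $\upsilon(\xi)$ from $U(\varkappa)$ by inverting $d\xi/d\varkappa=1/U$. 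Most importantly, the obstacle you flag at the end is genuine and is nowhere treated in the paper: for $m=0$ and $\sigma<c+a(\bar{\varkappa}-M)$ one has $A<c$, so $H$ vanishes linearly at $\varkappa=M$, $1/U$ fails to be integrable there, and $\varkappa(\xi)\to M$ only as $\xi\to+\infty$; the conclusions $\upsilon(\xi_+)=0$, $\varkappa(\xi_+)=M$ then cannot hold on a compact interval, so the lemma as stated is secure only for $m>0$. This is a defect of the paper's statement and proof, not of your argument. One refinement to your last paragraph: the formula $H'(M)=-a\,g(A)<0$ tacitly assumes $A<c$; at the endpoint $\sigma=c+a(\bar{\varkappa}-M)$ one has $g(A)=+\infty$, $H$ vanishes sublinearly, and whether $1/U$ becomes integrable for $m=0$ then depends on a two-sided rate of approach of $\Phi$ to $c$, which (\ref{H1})--(\ref{H2}) do not guarantee but which does hold for the relativistic prototype, where $c-\Phi(y)\asymp y^{-2}$.
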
 
\begin{remark}
It is easy to verify that under the hypotheses of Lemma \ref{ov1}, we have $\upsilon'(\xi_+)=-\infty$. In addition, $\upsilon(\xi)>0, \, \xi \in (\xi_-,\xi_+)$, and $\upsilon(\xi_-)=0$ if and only if $\sigma=a\left( \bar{\varkappa}- \frac M 2\right)$. In this case, we find $\upsilon'(\xi_-)=+\infty$.
\end{remark}
As a consequence of this Lemma we have
\begin{theorem}
Assume (\ref{M}) and take $\sigma= a\left( \bar{\varkappa}- \frac M 2\right)$. Then, there exists $\upsilon:[\xi_-, \xi_+]\to \RR$ so that $u(t,x)=\upsilon(x-\sigma t)$
provides a continuous entropy solution of (\ref{s}).
\end{theorem}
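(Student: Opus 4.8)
The plan is to leverage Lemma \ref{ov1} directly, which already produces the candidate profile $\upsilon$, and to verify that with the specific choice $\sigma = a\left(\bar{\varkappa} - \frac{M}{2}\right)$ the resulting function is indeed a continuous entropy solution. First I would note that the hypothesis (\ref{M}), $M \leq \frac{2c}{a}$, guarantees that the interval of admissible speeds (\ref{sigma3}) is nonempty, and that $\sigma = a(\bar{\varkappa} - \frac{M}{2})$ is precisely its left endpoint. By the Remark following Lemma \ref{ov1}, this is exactly the critical value at which the profile vanishes at \emph{both} endpoints: $\upsilon(\xi_-) = \upsilon(\xi_+) = 0$, with vertical slopes $\upsilon'(\xi_-) = +\infty$ and $\upsilon'(\xi_+) = -\infty$. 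Thus the profile given by (\ref{29'}) is continuous on all of $\R$ once extended by zero outside $[\xi_-, \xi_+]$, since it matches the zero exterior value at the boundary.

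Next I would verify that $u(t,x) = \upsilon(x - \sigma t)$ is a distributional solution in the sense of (\ref{distr}). Since the profile solves (\ref{umu}) in the interior, reversing the computation that led from (\ref{s}) to (\ref{u})—that is, checking that (\ref{u}) with $K=0$ reproduces the traveling-wave reduction (\ref{*azul})—shows the PDE holds classically on $Q_T \setminus \Sigma$. The delicate point is the behavior at the two moving fronts $\sigma_\pm(t) = \xi_\pm + \sigma t$. Because the flux $u\,\Phi(u^{m-1}u') = \upsilon\,\Phi(\upsilon^{m-1}\upsilon')$ saturates (the argument $\upsilon^{m-1}\upsilon' \to \mp\infty$ but $\Phi$ is bounded by $c$), and since $\upsilon \to 0$ at the endpoints, the product $\upsilon\,\Phi(\upsilon^{m-1}\upsilon')$ has a continuous limit; I would check that the total flux $\upsilon\left(\Phi(\upsilon^{m-1}\upsilon') + a(\bar{\varkappa} - \varkappa) + \sigma\right)$ equals $K = 0$ up to and including the boundary, so no jump in the flux occurs and the Rankine--Hugoniot relation (\ref{rhcond}) is trivially satisfied with $u_\pm = 0$. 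This confirms that the weak formulation (\ref{distr}) holds with no singular contribution from $\Sigma$.

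The main obstacle I anticipate is establishing the \emph{entropy} condition rather than mere distributional solvability, since the theorem asserts an entropy solution. For continuous profiles where the solution touches zero at both ends, one must confirm that the solution produced matches the unique entropy solution guaranteed by the existence theory (Theorem \ref{EUTEparabolic}), ensuring that the vertical slopes at the fronts are of the admissible (saturated) type and not spurious. I would argue this by noting that the saturation conditions $\upsilon'(\xi_\pm) = \mp\infty$ are exactly the signatures (\ref{satura}) of admissible flux-saturated fronts, so the entropy inequalities are satisfied for the same reasons they hold in the construction of the dual-problem solution in Section 2. Finally, I would verify the integrability condition (\ref{integral}), i.e.\ that $\int |\upsilon'|\,d\xi < \infty$ despite the infinite slopes at the endpoints; this follows because near $\xi_+$ the profile behaves like $\upsilon \sim (\xi_+ - \xi)^{1/(m+1)}$ composed with the regularity of $G$ from Remark \ref{R1}, giving an integrable (though unbounded) derivative, so that $\upsilon \in W^{1,1}(\xi_-, \xi_+) \subset C^0$ and the continuity claim $u \in C^0(\overline{Q_T})$ holds globally.
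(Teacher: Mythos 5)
Your proposal is correct and follows essentially the same route as the paper, which states this theorem as a direct consequence of Lemma \ref{ov1} and the remark after it: at the critical speed $\sigma = a\left(\bar{\varkappa}-\frac M 2\right)$ the profile of (\ref{29'}) vanishes at both endpoints with saturated slopes $\upsilon'(\xi_\pm)=\pm\infty$, so its extension by zero is continuous and the fronts are admissible, making the entropy inequality hold with no singular contribution. Your extra verifications (trivial Rankine--Hugoniot with $u_\pm=0$, integrability of $\upsilon'$) merely flesh out what the paper leaves implicit; the only small slip is the endpoint asymptotic $\upsilon \sim (\xi_+-\xi)^{1/(m+1)}$, which is the correct rate in the mass variable $\varkappa$ but becomes $(\xi_+-\xi)^{1/m}$ in the space variable $\xi$ --- still yielding an integrable derivative, so your conclusion $\upsilon\in W^{1,1}(\xi_-,\xi_+)$ stands.
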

\begin{remark} When $a\left( \bar{\varkappa}- \frac M 2\right)< \sigma \leq c+a(\bar{\varkappa}-M)$, the functions defined in (\ref{29'}) are solutions of (\ref{umu}). However, when we extend by zero the functions $\tilde{u}(t,x)=\upsilon(x-\sigma t), \; \tilde{\mu}(t, x)=\varkappa(x-\sigma t), \; t\geq 0, \, x\in[\xi_-+\sigma t, \xi_++\sigma t]$, they do not verify the saturation condition at the point $x=\xi_- +\sigma t$, that is a discontinuity point.
\end{remark}

\

\noindent {\it Case 2: Profile with jump.}  We are now looking for traveling wave profiles that satisfy $\upsilon(\xi_{\pm})=\upsilon^{\pm}>0$, with $\upsilon'(\xi_{\pm})=\mp \infty$. Inserting $\xi=\xi_\pm$ into (\ref{u}) we obtain
$$
\displaystyle \upsilon^+(-c+a(M-\bar{\varkappa})+\sigma)=K=\upsilon^- (c-a\bar{\varkappa}+\sigma).
$$
In order  that $K = 0$,  $\sigma=a\bar{\varkappa}-c=a\bar{\varkappa}+c-aM$ has to be fulfilled. Thus, this implies 
\begin{equation}\label{sigma21}
\displaystyle  M=\frac{2c} a, \; \,  \mbox{ and }\; \sigma=a\left(\bar{\varkappa}-\frac M 2 \right)=a\bar{\varkappa}-c.
\end{equation}
Reasoning as in the previous case, we find that the traveling wave profile we are looking for must verify the following system
\begin{equation}\label{umub}
\left\{\begin{array}{l}
\displaystyle  \upsilon^{m-1}\upsilon'=g(c-a\varkappa),\\
\displaystyle  \varkappa'=\upsilon,
\end{array}\right.
\end{equation}
with boundary conditions $\upsilon(\xi_\pm)=\upsilon^\pm, \; \varkappa(\xi_-)=0,\; \varkappa(\xi_+)=M.$

Then, the function $U(\varkappa)=\upsilon(\xi(\varkappa))$ must fulfill
$$
\displaystyle  U^m\frac{dU}{d\varkappa}=g(c-a\varkappa), \; \varkappa\in (0,M), \; U(0)=\upsilon^-, \; U(M)=\upsilon^+.
$$
Integrating between $0$ and $\varkappa$, we get
$$
\displaystyle  U(\varkappa)^{m+1}=(\upsilon^-)^{m+1} + \frac{m+1} a \Big[G(c)-G(c-a\varkappa)\Big], \quad \varkappa\in[0,M].
$$
Because of the symmetry of the  function $G$, we deduce that it is necessarily satisfied $\upsilon^-=\upsilon^+ :=\upsilon^{\pm}$. 
Then, we obtain the following result:
\begin{theorem}
 Let $\displaystyle M=\frac{2c} a$  and $\upsilon^{\pm}>0$. Then, the functions
$$\begin{array}{l}
\displaystyle \upsilon(\xi):= \left\{(\upsilon^{\pm})^{m+1}+\frac{m+1}{a}\Big[G(c)-G(c-a\varkappa(\xi))\Big]\right\}^{1/(m+1)}, \\ \displaystyle  \varkappa(\xi)=\int_{\xi_-}^\xi \upsilon(\tau)\, d\tau,
\end{array}
$$
defined in the interval $[\xi_-, \xi_+]$ are solutions of  (\ref{umub}) in $(\xi_-, \xi_+)$, 
and the boundary conditions  $\upsilon(\xi_-)=\upsilon(\xi_+)=\upsilon^{\pm}$ hold. 
\end{theorem}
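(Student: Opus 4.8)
The plan is to read the statement as a verification that the explicit formulas solve system (\ref{umub}) together with the prescribed boundary data, relying on the derivation already carried out in Case 2. First I would record that the quantity under the root is genuinely positive. Abbreviating by $U(\varkappa)$ the bracketed expression raised to the power $1/(m+1)$, so that
$$U(\varkappa)^{m+1}=(\upsilon^{\pm})^{m+1}+\frac{m+1}{a}\bigl[G(c)-G(c-a\varkappa)\bigr],\qquad \varkappa\in[0,M],$$
I would note that, by Remark \ref{R1}, $G$ is even with $G(0)=0$ and is nondecreasing on $[0,c]$ since $g=\Phi^{-1}\geq 0$ there; hence $G(s)\leq G(c)$ for every $s\in[-c,c]$. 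Because $M=2c/a$ forces $c-a\varkappa\in[-c,c]$, the bracket is nonnegative, so $U(\varkappa)^{m+1}\geq(\upsilon^{\pm})^{m+1}>0$. Thus $U$ is well defined, continuous on $[0,M]$, and bounded both below by $\upsilon^{\pm}$ and above.

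Next I would reconstruct the independent variable, since $\varkappa$ is only given implicitly through $\varkappa'=\upsilon=U(\varkappa)$ with $\varkappa(\xi_-)=0$. As $U$ is continuous and strictly positive, this scalar autonomous ODE has a strictly increasing $C^1$ solution $\varkappa(\xi)$, which attains the value $M$ at the finite abscissa $\xi_+=\xi_-+\int_0^M d\varkappa/U(\varkappa)$, the integral being finite because $U$ is bounded away from $0$ and from $\infty$ on $[0,M]$. This simultaneously fixes the length $\xi_+-\xi_-$ in terms of $M$ and $\upsilon^{\pm}$, and produces the profile $\upsilon(\xi):=U(\varkappa(\xi))$.

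The verification of (\ref{umub}) is then a chain-rule computation. Differentiating the relation for $U^{m+1}$ in $\varkappa$ gives $U^m\,\frac{dU}{d\varkappa}=g(c-a\varkappa)$; since $\upsilon'(\xi)=\frac{dU}{d\varkappa}\,\varkappa'(\xi)=\frac{dU}{d\varkappa}\,U$, I obtain $\upsilon^{m-1}\upsilon'=U^{m-1}\,U\,\frac{dU}{d\varkappa}=U^{m}\frac{dU}{d\varkappa}=g(c-a\varkappa)$, which is the first equation, while the second, $\varkappa'=\upsilon$, holds by construction. The boundary conditions follow by evaluation at the endpoints: $\varkappa(\xi_-)=0$ yields $\upsilon(\xi_-)=U(0)=\upsilon^{\pm}$, and $\varkappa(\xi_+)=M$ together with $c-aM=-c$ and the evenness of $G$ gives $\upsilon(\xi_+)=U(M)=\upsilon^{\pm}$, which is precisely the mechanism forcing $\upsilon^-=\upsilon^+$ noted before the statement.

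Finally I would confirm admissibility of the jump, namely the saturation condition $\upsilon'(\xi_{\pm})=\mp\infty$. From $\upsilon'=\frac{dU}{d\varkappa}\,U=g(c-a\varkappa)\,U^{1-m}$, the blow-up of $g$ at $\pm c$ recorded in Remark \ref{R1} gives $\upsilon'(\xi_-)=+\infty$ at $\varkappa=0$ and $\upsilon'(\xi_+)=-\infty$ at $\varkappa=M$. I expect the only genuinely delicate point to be the well-posedness of the implicit step, that is, showing the coupled pair $(\upsilon,\varkappa)$ is consistently defined and that $\varkappa$ reaches $M$ over a finite length despite the vertical slopes of $U$ at the endpoints; the positivity and boundedness of $U$ established at the outset are exactly what resolve this, reducing the rest to routine differentiation.
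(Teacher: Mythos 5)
Your proposal is correct and follows essentially the same route as the paper: the identity $U^m\,dU/d\varkappa=g(c-a\varkappa)$ obtained by differentiating (or, in the paper, integrating) the relation for $U^{m+1}$ in the mass variable, together with the evenness of $G$ at $c-aM=-c$, is exactly the computation in Case 2. Your additional care about well-posedness of the implicit pair $(\upsilon,\varkappa)$ --- positivity of the bracket, solvability of $\varkappa'=U(\varkappa)$, and finiteness of $\xi_+-\xi_-=\int_0^M d\varkappa/U(\varkappa)$ --- is a point the paper glosses over, but it does not change the argument.
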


\begin{remark}
Note that  $\upsilon'(\xi_-)=+\infty,\; \upsilon'(\xi_+)=-\infty$.
\end{remark}

\begin{remark} If $u (t,x)=\upsilon(x-\sigma t)$ is an entropic solution on the frame 
\cite{CCCSS-SUR,CCCSS-INV}, 
then its derivative must be a Radon measure on the support. If we assume that this support is finite, then it can be shown 
that it is one of those obtained previously. However, the entropy condition does not allow us in principle to determine if the
set in which the equation (\ref{*azul}) is verified is nonempty.
\end{remark}

\section*{Appendix: Distributional and entropy solutions}
Let us first define the entropy solutions context. However, as we have pointed out before the results of the paper are perfectly valid in the general context of solutions in the  distributional sense.  Let us briefly explain these concepts, see \cite{ARMAsupp,ACM2005,CCCSS-SUR}.

To connect with the framework of entropy solutions we observe that if $ (t, x) $ is a point of discontinuity
(necessarily $ x = \sigma_{\pm} (t) $), then the abstract condition (\ref{rhcond}) is written in (\ref{desplaz}).

The flux for the differential equation at a point $(t, x)$ is given by
$$
\mathcal{F}(t,x)= \left ( \Phi
\left (u^{m-1}(t,x){\frac{\partial u}{\partial x}}(t,x)\right) -a\frac{\partial S}{\partial x}(t,x)\right ) u (t,x).
$$
If $x=\sigma_+(t)$ (the other case is similar), we take into account (\ref{satura}), (\ref{r1}) to deduce
\begin{eqnarray*}
 \mathcal{F}(t,x^+)&=& 0, \qquad \mathcal{F}(t,x^-)= u(t,\sigma_+ (t)), \\
 u(t,x^+)&= &0, \qquad  \ u(t,x^-)= u(t,\sigma_+ (t)(-c+a(M-\overline{\mu}(t)),
\end{eqnarray*}
from which we conclude that $V(t)= \sigma '(t) $ is the speed of movement of the singularity. 

A notion of entropy solution is required in this framework in order to characterize solutions with jump discontinuities, which is extended here to the $d$-dimensional case  although in this paper we are only considering  one  space dimension.

Let us introduce now some useful notation. For $a <
b$ and $l \in \R$, let $T_{a,b}(r) := \max \{\min \{b,r\},a\}$, $ T_{a,b}^l =  T_{a,b}-l$.
We denote \cite{ARMAsupp,ACM2005,CCCSS-SUR}
\[
\begin{split}
\mathcal T_r & := \{ T_{a,b} \ : \ 0 < a < b \},  
\\
\mathcal{T}^+ & := \{ T_{a,b}^l \ : \ 0 < a < b ,\, l\in \R, \, T_{a,b}^l\geq 0 \}.  
\end{split}
\]
Let us denote ${\mathcal P}$ the set of Lipschitz continuous
functions $p : [0, +\infty) \rightarrow \R$ satisfying
$p^{\prime}(s) = 0$ for $s$ large enough. We write ${\mathcal
P}^+:= \{ p \in {\mathcal P} \ : \ p \geq 0 \}$. By $L^1_{w}(0,T,BV(\R^d))$ (resp. $L^1_{loc, w}(0, T,
BV(\R^d))$) we denote the space of weakly-*
measurable functions $w:[0,T] \to BV(\R^d)$. Following Dal Maso \cite{Dalmaso} we consider the  
functional
\begin{equation}
\nonumber
\begin{array}{ll}
\displaystyle {\mathcal R}_g(u) := \int_{\Omega} g(x,u(x), \nabla
u(x)) \, dx + \int_{\Omega} g^0 \left(x,
\tilde{u}(x),\frac{Du}{\vert D u \vert}(x) \right) \,  d\vert D^c
u \vert \\ 
\qquad \qquad \displaystyle + \int_{J_u} \left(\int_{u_-(x)}^{u_+(x)}
g^0(x, s, \nu_u(x)) \, ds \right)\, d \H^{d-1}(x),
\end{array}
\end{equation}
for $u \in BV(\Omega) \cap L^\infty(\Omega)$, being ${\mathcal H}^{d-1}$ the
 $(d-1)$-dimensional
Hausdorff measure in $\R^d$.
The recession
function $g^0$ of $g$ is defined by
\begin{equation*}
\label{Asimptfunct}
 g^0(x, z, \xi) = \lim_{t \to 0^+} t\, g \left(x, z, \frac{\xi}{t}
 \right).
\end{equation*}
Then, for each
$\phi\in {C}_c(\R^d)$, $\phi \geq 0$ and $T \in \mathcal
T_r$, we define the Radon measure
$g(u, DT(u))$ by
\begin{equation}
\label{FUTab}
\begin{array}{c}
\displaystyle \langle g(u, DT(u)), \phi \rangle : = {\mathcal R}_{\phi g}(T_{a,b}(u))+
\displaystyle\int_{\{u \leq a\}} \phi(x)
\left( g(u(x), 0) - g(a, 0)\right) \, dx 
\\
 \displaystyle
\displaystyle  + \int_{\{u \geq b\}} \phi(x)
\left(g(u(x), 0) - g(b, 0) \right) \, dx,
\end{array}
\end{equation}
 where $DT$  denotes the gradient of $T$ in the sense of distributions.
 Let us introduce $h: \R \times \R^d \times \R^d\rightarrow \R$ defined by
\begin{equation}
\label{hdef}
h(z,\xi, S_x):=  {\bf F}(z, \xi, S_x) \xi, \quad \mbox{ where } \quad {\bf F}(z, \xi, S_x)=z\Phi(z^{m-1}\xi) -a S_x z ,
\end{equation}
where $S_x$ is given by \eqref{r1} and $S(t, \cdot) \in W^{2,p}$. We define $h_R(u,DT(u))$ as the Radon
measure given by (\ref{FUTab}) with $h_{R}(z,\xi) = R(z) h(z,\xi, S_x)$. Finally,  let $J_q(r)$ denote the primitive of $q$ for any function $q$; i.e. 
$$\displaystyle J_q(r):=\int_0^r q(s)\,ds.$$

\begin{definition} 
\label{DefESevP} 
{\rm
A measurable function $u\!:\,(0,T)\times \R^d \rightarrow \R^+$ is an {\it entropy solution}
of (\ref{s}) in $Q_T = (0,T)\times \R^d$ if 
\begin{itemize}
\item $u \in C([0, T];
L^1(\R^d))$

\item $u(0,x) = u_{0}(x), x \in \R^d,$

\item $T(u(\cdot)) \in L^1_{loc, w}(0, T, BV(\R^d)) \ \mbox{for all}\ T \in \mathcal
T_r$

\end{itemize}
and the equation is satisfied in the following sense:
\begin{itemize}

\item[(i)] Equation \eqref{s} is verified  in $\mathcal{D}'((0,T)\times \R^d)$

\item[(ii)] Given any truncations $R \in {\mathcal P}^+, \ T \in {\mathcal T}^+$  and any $\eta \in \mathcal{D} (Q_T)$,
 the following entropy inequality is satisfied:
\begin{eqnarray}
\label{putataghere}
\displaystyle
 \int_{Q_T} && \eta
h_R(u,DT(u)) \, dt \ +  \displaystyle \int_{Q_T} \eta h_T(u,DR(u)) \, dt
\\ \nonumber
&\leq & \displaystyle \int_{Q_T}J_{TR}(u) \partial_t\eta\ dxdt \ -  \int_{Q_T}  {\bf F}(u, \nabla u) \nabla \eta \ T(u)
R(u) \ dxdt.
\end{eqnarray}
\end{itemize}
 }
\end{definition}

The following result indicates the type of entropic solution we can expect and its proof would require to follow the steps in the proofs of \cite{ACMSV,ARMAsupp,ACM2005,CCCSS-SUR}. Here, we omit this proof for being outside the objectives of the paper and will be the content of a forthcoming publication.

\begin{theorem}
\label{EUTEparabolic}
Under the  assumptions on $\Phi$ \eqref{H1}-\eqref{H2}, for any initial datum $0 \leq u_0 \in L^1(\R^d)\cap L^\infty(\R^d)$ satisfying 
\begin{itemize}
\item[(i)] $u_0$ has jump discontinuities at the boundaries of its support;
\item[(ii)] the part of the interior of the support in which $u_0$ vanishes is a set of null measure;
\end{itemize}
there exists $T^*
> 0$ and a unique entropy solution $u$ of
(\ref{s})   in $ [0,T]\times \R^d$ , with $T<T^*$, such that $u(0) = u_0$.
Moreover, if $u(t)$,
$\overline{u}(t)$ are the entropy solutions associated with
initial data $u_0$, $\overline{u}_0 \in L^{1}(\R^d)^+$, respectively, then
\begin{equation}
\label{CPentropys}
\Vert (u(t) - \overline{u}(t))^+ \Vert_1 \leq
\Vert (u_0 - \overline{u}_0)^+ \Vert_1 \ \ \ \ \ \ {\rm for \
all} \ \   0\leq t < T^*.
\end{equation}
The value of $T^*$  until which the solution is uniformly bounded in $ [0,T]\times \R^d$ , with $T<T^*$,  is  limited by $\frac{\ell(0)}{aM-2c}$
in the case of  possible blow--up and a solution with two fronts at the ends of its support, or might be non--limited in the absence of  blow--up, in the sense given in Section \ref{ss}.
\end{theorem}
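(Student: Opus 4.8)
The plan is to combine a vanishing--viscosity regularization with a Kru\v{z}kov doubling of variables argument, adapted to the $BV$ entropy framework of Definition \ref{DefESevP} and developed in \cite{ACMSV,ARMAsupp,ACM2005,CCCSS-SUR}. First I would regularize the system by adding a viscous term $\varepsilon\,\partial_{xx}u$ and smoothing the flux so that it becomes uniformly parabolic, coupling it with a chemoattractant $S_\varepsilon$ reconstructed from $u_\varepsilon$ through $-\partial_{xx}S_\varepsilon=u_\varepsilon$. The existence of regular solutions for the associated dual problem is already furnished by Theorem \ref{tb1}, and transferring these back to the original spatial coordinates via the mass coordinate \eqref{m} yields a family of regular approximations $u_\varepsilon$.

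The second step is to collect a priori estimates uniform in $\varepsilon$. The $L^1$ and $L^\infty$ bounds follow from mass conservation and from the super/sub--solution construction in the proof of Theorem \ref{tb1}; crucially, the support identity $\ell(t)=\ell(0)+(2c-aM)t$ obtained from \eqref{f1e0} and analyzed in Section \ref{ss} shows that $u_\varepsilon$ stays uniformly bounded as long as $\ell(t)>0$, i.e. up to $T^*=\frac{\ell(0)}{aM-2c}$ when $M>\frac{2c}{a}$, and for all times when $M\le\frac{2c}{a}$. The total--variation bound needed for compactness is the $\varepsilon$--uniform estimate \eqref{distribucionale0}, which, after returning to the variable $x$, provides a uniform $BV$ bound for $u_\varepsilon(t,\cdot)$; combined with equicontinuity in time this lets me extract, by Helly's theorem, a limit $u\in L^1(0,T;BV(\R))$.

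Third, I would verify that this limit is an entropy solution. Continuity of the solution operator of the Poisson equation transfers the elliptic coupling \eqref{r1} to the limit, while lower semicontinuity of the Dal Maso functionals $\mathcal{R}_{\phi g}$ yields the entropy inequality \eqref{putataghere} for every truncation pair $T\in\mathcal{T}^+$, $R\in\mathcal{P}^+$. Uniqueness and the comparison estimate \eqref{CPentropys} then follow by doubling the space--time variables in the entropy inequalities for $u$ and $\overline{u}$: the crossed truncation terms reproduce a Kato inequality for $(u-\overline{u})^+$, in which the local flux difference is handled exactly as for scalar conservation laws, while the nonlocal drift $-a(\mu-\overline{\mu})u$ is split into a genuine transport part and a coupling--error part. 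The latter is estimated through $-\partial_{xx}(S-\overline{S})=u-\overline{u}$ together with the decay of $S$ at infinity, giving $\|\partial_x(S-\overline{S})(t,\cdot)\|_\infty\le C\|(u-\overline{u})(t,\cdot)\|_1$. Finally, the value of $T^*$ is characterized by matching the abstract front velocities \eqref{desplaz} with the explicit two--front solution constructed in Section 3: when the solution carries two genuine jump fronts, $\ell(t)$ reaches $0$ exactly at $\frac{\ell(0)}{aM-2c}$, forcing concentration into a Dirac mass, whereas in the regime $M\le\frac{2c}{a}$ the support never collapses and $T^*$ may be infinite.

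The main obstacle I anticipate lies precisely in the passage to the limit in, and the doubling of variables for, the entropy inequality \eqref{putataghere} in the presence of the chemotactic coupling. On the one hand one must control the singular (jump) contribution in the Dal Maso functional and guarantee its lower semicontinuity along the approximation; on the other hand, the coupling--error term coming from the nonlocal drift must be shown not to destroy the clean comparison \eqref{CPentropys}, i.e. its contribution to the Kato inequality for the positive part must either carry the favorable sign or be reabsorbed by the entropy dissipation. This delicate interplay between the $BV$ structure of the moving fronts and the Keller--Segel coupling is exactly the technical core built in \cite{ACMSV,ARMAsupp,ACM2005}, and the reason the complete argument is deferred to a forthcoming publication.
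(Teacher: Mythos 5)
The first thing to say is that the paper contains no proof of Theorem \ref{EUTEparabolic} to compare against: the authors explicitly state that the proof ``would require to follow the steps in the proofs of \cite{ACMSV,ARMAsupp,ACM2005,CCCSS-SUR}'', that it is omitted as being outside the objectives of the paper, and that it will be the content of a forthcoming publication. Your outline (viscous regularization, $\varepsilon$-uniform estimates, lower semicontinuity of the Dal Maso functionals to pass to the limit in \eqref{putataghere}, Kru\v{z}kov doubling for uniqueness) is exactly the route the paper gestures at, and you correctly locate the technical core in the interaction between the jump part of the $BV$ structure and the nonlocal coupling. As a roadmap it is faithful; as a proof it is not one, and you say so yourself.

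Two gaps deserve to be named concretely, because they are not mere technicalities. First, the comparison estimate \eqref{CPentropys} is stated as an exact $L^1$ contraction, with no constant and no exponential factor. Your treatment of the coupling error through $\Vert \partial_x(S-\overline{S})(t,\cdot)\Vert_\infty \leq C\Vert (u-\overline{u})(t,\cdot)\Vert_1$ feeds back into the Kato inequality as a source term of size $C\bigl(\Vert \overline{u}\Vert_\infty + \Vert \overline{u}(t,\cdot)\Vert_{BV}\bigr)\Vert u-\overline{u}\Vert_1$; moreover the piece coming from $\partial_{xx}(S-\overline{S})=-(u-\overline{u})$ enters with the \emph{aggregative} (unfavorable) sign, since the drift is attractive. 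Gronwall then yields at best $\Vert (u(t)-\overline{u}(t))^+\Vert_1 \leq e^{Ct}\Vert (u_0-\overline{u}_0)^+\Vert_1$, which is strictly weaker than \eqref{CPentropys}; your alternative (``carries the favorable sign or is reabsorbed by the entropy dissipation'') is precisely the assertion that would need proof, and nothing in the proposal supplies it. Second, there is a scope mismatch: the theorem is stated in $\R^d$ and for data that are merely in $L^1(\R^d)\cap L^\infty(\R^d)$ (allowed to vanish on a null set inside the support), whereas your existence step rests entirely on the mass coordinate \eqref{m} and on Theorem \ref{tb1}, which are intrinsically one-dimensional and require $v_0\in W^{1,\infty}(0,M)$ with $v_0\geq \sigma_1>0$, i.e.\ $u_0$ Lipschitz inside its support and bounded away from zero there. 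Bridging from that restricted class to the stated class of data requires a density argument that itself leans on the contraction \eqref{CPentropys} — the very estimate whose proof is incomplete — so the logical order of the two steps must be set out explicitly rather than assumed.
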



\begin{thebibliography}{[99]}

\bibitem{[AQ08]} A.R.A. Anderson,  V. Quaranta, \textit{Integrative
mathematical oncology}, Nature Reviews  Cancer {\bf 8}
(2008), 227-234.

\bibitem{ACMSV} F. Andreu, V. Caselles, J.M. Maz\'on, J. Soler, M. Verbeni, \textit{Radially symmetric solutions of a tempered diffusion equation. A porous media flux-limited case},  SIAM J. Math. Anal. {\bf 44} (2012), 1019--1049.

\bibitem{ARMAsupp}
{F.~Andreu, V.~Caselles, J.M. Maz\'on, S. Moll,} \textit{Finite Propagation Speed for Limited Flux Diffusion Equations}. Arch. Rat. Mech. Anal. {\bf 182} (2006),  269--297.

\bibitem{ACM2005}
F.~Andreu, V.~Caselles,  J.M. Maz\'on,
\newblock {\it The Cauchy Problem for a Strongly Degenerate Quasilinear
Equation},
\newblock  J. Eur. Math. Soc. {\bf 7} (2005),
361--393.


\bibitem{[BBNS07]} N. Bellomo, A. Bellouquid, J. Nieto,  J. Soler,
\textit{Multicellular growing systems: Hyperbolic limits towards
macroscopic description},  {Math. Mod. Meth. Appl. Sci.}
{\bf 17} (2007),  1675--1693.


\bibitem{[BBNS10]} N. Bellomo, A. Bellouquid, J. Nieto,  J. Soler,  \textit{Complexity and  mathematical tools toward the modelling of multicellular  growing systems},  {Math. Comput. Modelling}, \textbf{51} (2010), 441--451.

\bibitem{[BBNS10B]} N. Bellomo, A. Bellouquid, J. Nieto, J. Soler,  \textit{Multiscale biological tissue models
and flux-limited chemotaxis from binary mixtures of multicellular
growing systems}, {Math. Mod. Meth. Appl. Sci.} {\bf 10} (2010), 1179--1207.

\bibitem{BBTW} N. Bellomo, A. Bellouquid, Y. Tao, M. Winkler, \textit{Toward a mathematical theory of Keller-- Segel models of pattern formation in biological tissues}, Math. Mod. Meth. Appl. Sci. {\bf 25} (2015), 1663--1763.

\bibitem{BW2} N. Bellomo, M. Winkler, \textit{Finite-time blow-up in a degenerate chemotaxis system with flux limitation}, Trans. Amer. Math. Soc. Ser. B {\bf 4} (2017), 31--67.

\bibitem{BW1} N. Bellomo, M. Winkler, \textit{A degenerate chemotaxis system with flux limitation. Maximally extended solutions and absence of gradient blow-up}, Comm. Part. Diff. Eq. {\bf 42} (2017), 436--473.


\bibitem{B} A. Blanchet, J. Dolbeault,  B. Perthame,
\textit{Two-dimensional Keller--Segel model:
optimal critical mass and qualitative properties of the solutions}.
Electron.  J.  Diff.
Eq. {\bf 32} (2006), 44.

\bibitem{BC} N. Bournaveas, V. Calvez, \textit{The one-dimensional Keller--Segel model with fractional diffusion of cells},  Nonlinearity {\bf 23} (2010), 923. 


\bibitem{CMSV} J. Calvo, J. Maz\'on, J. Soler, M. Verbeni, \textit{Qualitative properties of the solutions of a nonlinear flux-limited equation arising in the transport of morphogens}, Math. Mod. Meth. Appl. Sci. {\bf 21} (2011), 893--937.

\bibitem{CCCSS-SUR}
J. Calvo, J. Campos, V. Caselles, O. S\'anchez, J. Soler, \emph{Flux--saturated porous media  equation and applications}. EMS Surveys in Mathematical Sciences {\bf 2(1)} (2015), 131--218.
\bibitem{CCCSS-INV}
 J. Calvo, J. Campos, V. Caselles, O. S\'anchez, J. Soler, \emph{Pattern Formation in a flux limited reaction-diffusion equation of porous media type}. Inv. Math. {\bf 206} (2016), 57--108.

\bibitem{CCCSS-EMS}
 J. Calvo, J. Campos ,  V. Caselles, O. S\'{a}nchez,  J. Soler, {\em Qualitative behaviour for flux--saturated
mechanisms: traveling waves, waiting time and
smoothing effects}, J. Eur. Math. Soc. {\bf 19} (2017), 441--472.

\bibitem{Campos}
 J. Campos ,  P. Guerrero, O. S\'{a}nchez,  J. Soler,
 \emph{On the analysis of travelling waves to a nonlinear flux limited
reaction-diffusion equation}.  Ann. Inst. H. Poincar\'e. Anal. Non Lin\'eaire {\bf 30} (2013), 141--155.

\bibitem{Campos2016}
 J. Campos, J. Soler, \emph{Qulitative behavior and traveling waves for flux-saturated porous media equations arising in optimal mass transportation}. Nonlinear Anal. TMA {\bf 137} (2016), 266--290. 

\bibitem{Carrillo}
J.A. Carrillo, V. Caselles, S. Moll, \textit{On the relativistic heat equation in one space dimension}.  Proc. London Math. Soc. {\bf 107} (2013), 1395--1423.


 \bibitem{preprintVicent}
 V. Caselles, \textit{Flux limited generalized porous media diffusion equations}, Publ. Mat. {\bf 57} (2013), 144--217.
 
  \bibitem{CCC2}
 V. Caselles, \textit{Convergence of flux limited porous media diffusion equations to its classical counterpart}. Ann. Sc. Norm. Super. Pisa Cl. Sci. {\bf (5) Vol. XIV} (2015), 481--505.

\bibitem{[CMPS04]} F.A. Chalub, P. Markovich, B. Perthame, C. Schmeiser,
 \textit{Kinetic models for chemotaxis and their drift--diffusion limits,}
Monatsh. Math. {\bf 142(1-2)} (2004), 123--141.

\bibitem{[CDMOSS06]} F.A. Chalub, Y. Dolak-Struss, P. Markowich, D. Oeltz, C. Schmeiser,  A. Soref,  \textit{Model hierarchies for cell aggregation by
chemotaxis,}  \textit{Math. Models Methods Appl. Sci.} {\bf 16}
(2006), 1173--1198.

\bibitem{[CH2]}  P.H. Chavanis,  \textit{Nonlinear mean field Fokker-Planck equations. Application to
the chemotaxis of biological populations,}
European Physical Journal B--Condensed Matter {\bf 62(2)} (2008),  179--208.

\bibitem{cher}  A. Chertock, A. Kurganov, X. Wang, Y. Wu,  \textit{On a chemotaxis model with saturated chemotactic flux,} Kinet. Relat. Models {\bf 5} (2012),  51--95

\bibitem{CKR}
{ A. Chertock, A. Kurganov, P. Rosenau},
 {\it Formation of discontinuities in flux-saturated degenerate parabolic equations},
 Nonlinearity  {\bf 16} (2003), 1875--1898.
 
 \bibitem{Dalmaso} G. Dal Maso, {\it Integral representatios on $BV(\Omega)$ of $\Gamma$-limits of variational integrals},
 Manuscripta Math. {\bf 30} (1980), 387--416.
 
\bibitem{[DYH07]} E. Dessaud, L.L. Yang, K. Hill, B. Cox, F. Ulloa, A. Ribeiro,
A. Mynett, B.G. Novitch, J. Briscoe,  \textit{Interpretation of the
sonic hedgehog morphogen gradient by a temporal adaptation
mechanism,} {Nature} {\bf 450} (2007) 717--720.

\bibitem{DP} J. Dolbeault, B. Perthame,  \textit{Optimal critical mass in the two--dimensional Keller-Segel model in
$\R^2$, }
C. R. Math. Acad. Sci. Paris {\bf 339} (2004), 611--616.

\bibitem{[DS09]} J. Dolbeault, C. Schmeiser,  \textit{The two--dimensional Keller-Segel model after blow-up,} {Disc. Cont. Dyn.
Syst} {\bf 25} (2009), 109--121.


\bibitem{[FK02]} J. Folkman, K. Kerbel,  \textit{Clinical translation of angiogenesis
inhibitors,} {Nature Review Cancer} {\bf 2} (2002),
727--739.

\bibitem{Giacomelli}
L. Giacomelli, \textit{Finite speed of propagation and waiting-time phenomena for degenerate parabolic equations with linear growth Lagrangian}, SIAM J. Math. Anal. {\bf 47 (3)} (2015), 2426--2441.


\bibitem{Gurtin1984} E.M. Gurtin, R.C. MacCamy, E.A. Socolovsky, {\textit A coordinate transformation for the porous media equation that renders the free boundary stationary}, Quart. Appl. Math. {\bf 42} (1984), 345--357. 

\bibitem{[HW00]} D. Hanahan,  R.A. Weinberg,  \textit{The hallmarks of
cancer,} Cell {\bf 100} (2000),  57--70.

\bibitem{H} T. Hillen, A. Potapov,
 \textit{The one-dimensional chemotaxis model: global existence and asymptotic profile}, Math. Mod. Meth. Appl. Sci. {\bf 27} (2004), 1783--1801.

\bibitem{[HP]} T. Hillen, K.J. Painter, \textit{A user's guide to PDE models for chemotaxis,}  {J. Math. Biol.}  {\bf 58} (2009), 183--217

\bibitem{[KS71]} E.F. Keller, L.A. Segel,  \textit{Traveling Bands of chemotactic
Bacteria: A Theoretical Analysis}, J. Theor. Biol. {\bf
30} (1971), 235--248.

\bibitem{[K80]} E.F. Keller,  \textit{Assessing the Keller-Segel model: how has it
fared? In biological growth and spread}, Proc. Conf., Heidelberg,
1979, 379--387, Springer Berlin, 1980.

\bibitem{KM} S. Kondo, T. Miura,  \textit{Reaction-Diffusion Model as a Framework for Understanding Biological Pattern Formation}, { Science} {\bf 329} (2010), 1616--1620.

\bibitem{Ros-Non} A. Kurganov, P. Rosenau, \textit{On reaction processes with saturating diffusion},
Nonlinearity {\bf 19}  (2006), 171--193 .

\bibitem{Li} G.M. Lieberman,  \textit{Second order parabolic differential equations}, World Scientific, 2005.

\bibitem{[LAC]} M. Lachowicz,  \textit{Micro and meso scales of description
corresponding to a model of tissue invasion by solid tumors,}
Math. Mod. Meth. Appl. Sci. {\bf 15} (2005), 1667--1683.

\bibitem{Ladyparabolico}
O. A. Ladyzhenskaja,  V.A. Solonnikov, N.N. Ural'ceva, \emph{Linear and quasi-linear equations of parabolic type}, AMS-Translations of Mathematical Monographs, 23, Providence, RI, 1968.

\bibitem{Levermore81}
C.D.~Levermore, G.C.~Pomraning, {\it A Flux-Limited Diffusion Theory}, { Astrophys. J.} {\bf 248} (1981), 321--334.

\bibitem{MePuSh97} A.M. Meirmanov, V.V. Pukhnachov, S.I. Shmarev, \newblock{\it Evolution equations and Lagrangian coordinates}, De Gruyter Expositions in Mathematics, 24, 1997.

\bibitem{MM}
D. Mihalas, B. Mihalas,
\newblock {\it Foundations of radiation hydrodynamics},
\newblock Oxford University Press, 1984.

\bibitem{[OH02]} H. Othmer  and T. Hillen,  \textit{The diffusion limit of
transport equations II: chemotaxis equations,} {SIAM J.
Appl. Math.} {\bf 62} (2002), 1222--1250.

\bibitem{[PA]} C.S. Patlak,  \textit{Random walk with persistant and external bias,}
{Bull. Math. Biophys.} {\bf 15} (1953), 311--338.

 \bibitem{Rosenau2}
{ P. Rosenau},
\textit{Tempered Diffusion: A Transport Process with Propagating Front and Inertial Delay},
 Phys. Review A {\bf 46} (1992), 7371--7374.

\bibitem{VGRaS} M. Verbeni, O. S\'anchez, E. Mollica, I. Siegl-Cachedenier, A. Carleton, I. Guerrero, A. Ruiz i Altaba, J. Soler, \textit{Modeling morphogenetic action  through flux-limited spreading},  Phys. Life Rev. {\bf 10} (2013), 457--475.

\end{thebibliography}
\end{document}